\newcommand{\ten}[1]{\boldsymbol{\mathscr{#1}}}
\newcommand{\mat}[1]{\mathbf{#1}}
\newcommand{\vct}[1]{\boldsymbol{#1}}
\newcommand{\bi}{\vct{i}}
\newcommand{\bx}{\vct{x}} 
\newcommand{\bz}{\vct{z}} 
\newcommand{\ba}{\boldsymbol a}
\newcommand{\by}{\boldsymbol y}
\newcommand{\bs}{\boldsymbol s}
\newcommand{\bone}{\boldsymbol 1} 
\newcommand{\bzero}{\boldsymbol 0} 
\newcommand{\blambda}{\boldsymbol \lambda} 
\newcommand{\bmu}{\boldsymbol \mu} 
\newcommand{\btheta}{\vct{\theta}} 
\newtheorem{theorem}{Theorem}
\newtheorem{lemma}{Lemma}
\newtheorem{corollary}{Corollary}
\newtheorem{conjecture}{Conjecture}
\newcommand{\vecc}{\text{vec} }
\newcommand{\dv}{\text{dvec} }
\newcommand{\diag}{\text{diag} }
\newcommand{\mbbE}{\mathbb E} 
\newcommand{\mbbR}{\mathbb R} 
\newcommand{\Poi}{\text{Poisson} }
\newcommand{\FIM}{\mathcal I }
\DeclareMathOperator*{\argmax}{\arg\!\max}
\newcommand{\hmM}{\widehat {\boldsymbol {\mathscr M}}}
\title{A Latent-Variable Formulation of the Poisson Canonical Polyadic Tensor Model: Maximum Likelihood Estimation and Fisher Information }
\author{
Carlos~Llosa-Vite$^1$, Daniel~M.~Dunlavy$^1$, Richard~B.~Lehoucq$^1$, \\ Oscar L\'opez$^2$, Arvind Prasadan$^1$
}
\date{%
    $^1$\textit{Sandia National Laboratories}, Albuquerque, NM and Livermore, CA\\%
    $^2$\textit{Harbor Branch Oceanographic Institute}, Florida Atlantic University, Ft Pierce, FL\\[2ex]%
}
\begin{document}

\maketitle
\abstract
We establish parameter inference for the Poisson canonical polyadic (PCP) model of tensor count data through a latent-variable formulation. 
Our approach exploits the property that any random tensor that follows the PCP model can be derived by marginalizing an unobservable random tensor of one dimension larger. 
The loglikelihood of this larger dimensional tensor, referred to as the ``complete" loglikelihood, is comprised of multiple loglikelihoods corresponding to rank one PCP models. 
Using this methodology, we first demonstrate that several existing algorithms for fitting non-negative matrix and tensor factorizations are Expectation-Maximization algorithms.  
Next, we derive the observed and expected Fisher information matrices for the PCP model by leveraging its latent-variable formulation. 
The Fisher information provides us crucial insights into the well-posedness of the tensor model, such as the role that the rank of parameter tensor plays in identifiability and indeterminacy. For the special case of PCP models with rank one parameter tensors, we demonstrate that these results are greatly simplified.

\begin{figure}[b!]
    \centering
    \includegraphics[width=.9\textwidth]{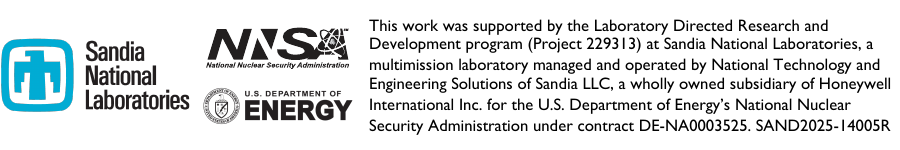}
\end{figure}

\section{Introduction}\label{sec:intro}
The contribution of our work is a formulation of the Poisson canonical polyadic (PCP) model for tensor count data as a latent variable model. In doing so, we provide new  statistical tools for parameter inference and analysis approaches applied to such models. We start by defining the PCP model and the current approach of parameter inference using maximum likelihood estimation.

A $P$-way random tensor $\ten{X}$ \emph{follows the PCP model}---or, equivalently, \emph{is PCP-distributed}---with parameter tensor $\ten{M}$ if 
\begin{subequations} \label{pcp-model-ll}
\begin{equation}\label{eq:pcp1}
x_{\bi}\sim\Poi\left(m_{\bi}\right), 
\quad m_{\bi}=\sum_{r=1}^R\prod_{p=1}^P \mat{A}_p(i_p,r) \; ,
\end{equation}
where $\ten{X} \in \mathbb{N}_{0}^{N_1\times N_2\times \dots\times N_P}$ with $\mathbb{N}_{0}$ denoting the set of natural numbers including 0, $\ten{M} \in \mathbb{R}_{+}^{N_1\times N_2\times \dots\times N_P}$ with $\mathbb{R}_{+}$ the set of positive real values, and the values $x_{\bi}$ are independent.
Here $x_{\bi}$ and $m_{\bi}$ denote the $\bi$-th entry of $\ten{X}$ and $\ten{M}$ respectively, where $\bi$ represents the multi-index vector $(i_1,i_2,\dots,i_P)$ with $i_k \in \lbrace1,2,\dots,N_k\rbrace$. The non-negative parameter tensor $\ten{M}$ is represented as a canonical polyadic (CP) model~\citep{BaKo2025} with non-negative tensor rank $R$~\citep{LiCo09} with factor matrices $\mat{A}_p \in \mathbb{R}_{+}^{N_p \times R}$ for $p \in \{1,\dots,P\}$, where $\mat{A}_p(i_p,r)$ denotes the entry of the matrix $\mat{A}_p$ in row $i_p$ and column $r$. 

The loglikelihood function of the PCP model is 
\begin{equation}\label{eq:llik1}
\ell(\bx|\btheta) = \sum_{\bi}  
\left[ x_{\bi}\log\left(\sum_{r=1}^R\prod_{p=1}^P \mat{A}_p(i_p,r)\right) - \left(\sum_{r=1}^R\prod_{p=1}^P \mat{A}_p(i_p,r)\right) -\log(x_{\bi}!) \right],
\end{equation}
where $\bx = \vecc(\ten{X})$\ is a vector of length $\prod_{p=1}^P N_p$ created using the natural ordering of $\ten{X}$~\cite{BaKo2025}, $\vecc (\mat{A}_p) \in \mathbb{R}_{+}^{N_p R} $ stacks the columns of the $p$th factor matrix of $\ten{M}$ into a vector, and
\begin{align}\label{theta-vec}
\btheta = [\vecc (\mat{A}_1)^\top\dots\vecc (\mat{A}_N)^\top]^\top \in \mathbb{R}_{+}^{(N_1 + \cdots + N_P)R}  
\end{align} 
\end{subequations}
denotes the parameter vector that contains all entries of the factor matrices of $\ten{M}$. 

Loglikelihood-based parameter inference on $\btheta$ is straightforward when $R=1$; however, when $R>1$ the presence of the sum inside the logarithm in Equation~\eqref{eq:llik1} significantly complicates the optimization and statistical analysis associated with parameter inference of the PCP model. For matrix count data, \citet{leeandseung00} used Jensen's inequality to iteratively lower bound the sum inside the logarithm in Equation~\ref{eq:llik1}, and then maximize it \citep{hunterandlange04}. Extending that approach, \citet{chiandkolda12} developed a majorization-minimization approach for estimating the parameters $\btheta$ in Equation~\ref{eq:llik1} for tensor count data. We build upon these previous advances and reformulate the PCP model as a latent-variable model, enabling us to study the PCP model through traditional frequentist-based statistical analyses. Indeed, we show in Sections~\ref{sec:genrank} and~\ref{sec:rank1} that our new latent variable model formulation leads to the first derivation of the Fisher information of the PCP model in the open literature, which allows us to prove that the model is identifiable.

The remainder of this paper is organized as follows. 
Related work and notation used in this paper are provided at the end of this section. 
In Section~\ref{sec:genrank}, we derive the PCP model with $R>1$ through its latent-variable formulation, introducing expectation-maximization (EM) algorithms to compute maximum likelihood estimators for the associated parameter inference problem.
Along the way, we demonstrate that the existing PCP model fitting algorithms of~\citet{leeandseung00} and~\citet{chiandkolda12} are instances of these (EM) algorithms. 
We then use the missing information principle of \citet{orchardandwoodbury72} to derive the observed and expected Fisher information matrices to help us determine the conditions for the identifiability of the PCP model. 
In Section~\ref{sec:rank1}, we explore parameter inference for the PCP model with $R=1$, illustrating that the latent-variable formulation and the properties of the model simplify dramatically. 
In particular, we show that the maximum likelihood estimator and expected Fisher information matrix have closed form expressions for models with $R=1$.
In Section~\ref{sec:simus} we explore the parameter estimation algorithms and the Fisher information of the PCP model empirically through various numerical experiments, illustrating the role of the Fisher information matrix in identifiability, and Barlett's identity.

\subsection{Related work and contributions}\label{sec:relatedwork}
Poisson canonical polyadic (PCP) tensor modeling has been used to uncover latent structures in multi-dimensional count data in applications such as network analysis 
\citep{dunlavyetal11a,ezicketal19,baskaranetal19,leggasetal20}, term-document analysis \citep{chewetal07,henrettyetal18}, email analysis
\citep{baderetal08}, link prediction \citep{dunlavyetal11b}, geospatial analysis \citep{henrettyetal17}, web page analysis \citep{koldaandbader06}, and differential entropy estimation~\citep{DuLeMaPr25}. 
Our work extends previous approaches to parameter inference for models of matrix count data~\citep{leeandseung00} and tensors~\citep{chiandkolda12}. Specifically, we extend the maximum likelihood estimation approach of~\citet{chiandkolda12} to the latent variable formulation of the PCP model defined in Section~\ref{sec:cpaprLatent}, which allows us to explicitly define the Fisher information of the model.

There has been a tremendous amount of effort directed toward the understanding of CP models from a statistical modeling perspective, but much of the work to date has focused on models where Gaussian noise is assumed in the data and least-squares estimators are computed. For example, \citep{sidiropoulosandbro00} established the identifiability of low-rank CP decompositions for continuous tensor data, and we study identifiability in more detail for the PCP model and count tensor data specifically through the lens of the Fisher information in Section~\ref{subsec:FIMgenrank}. \citet{liuandsidiropoulos01} established Cram\'{e}r--Rao lower bounds (CRLBs) on the variance of CP decompositions for 3rd- and 4th-order tensors, specifically for the parameters $\btheta$ in Equation~\eqref{theta-vec}, to identify variance-optimal estimators, and \citet{tichavskyetal13} extended those results to general-order tensors. \citet{sidiropoulosetal17} also provided CRLBs for the variance of estimators of $\ten{M}$, the parameter tensor of the CP decomposition. And these results have led to deeper study into the stability of decompositions~\citep{tichavskyandkoldovsky11}, optimal decompositions for structured tensors~\citep{boizardetal2015}, coupled decompositions~\citep{prevostetal22}, and a wide range of applications~\citep[e.g.,][]{decurningeetal21, rakhimovetal21}. \citet{fariasandcomon15} illustrated similar advances for continuous tensor data with various non-Gaussian noise distributions as well, but their problem formulations cannot be leveraged directly for the PCP models. As the inverse of the Fisher information matrix is a critical component in the computation of CRLBs, our work on the Fisher information of the PCP model in Sections~\ref{subsec:FIMgenrank} and~\ref{subsec:FIM1} is a key initial step in establishing a CRLB for estimators of the PCP model. Although we establish the Fisher information in this work, we leave the development of the latter as future work.

\citet{huanganssidiropoulos17} developed a closed-form solution of a CP decomposition with parameter tensor having non-negative tensor rank of one using Kullback-Leibler (KL) divergence, similar to loss function used by \citet{leeandseung00}. They also extended their ideas to the case of parameter tensors with non-negative tensor rank greater than one to establish an expectation-maximization-like iterative algorithm for fitting KL divergence-based CP models, similar to the approach of \citet{chiandkolda12}. That work is most similar to our work presented here, although we extend the advances presented there by explicitly establishing a latent variable model for and Fisher information of the PCP model.

\subsection{Notation}
\label{sec:notation}

The notation for the PCP model was introduced in Equations~\eqref{eq:pcp1}-\eqref{theta-vec} and, for convenience, we also provide additional notation that is used throughout the remainder of this paper. Although we follow standard tensor notation defined in~\citep{BaKo2025}, several deviations and extensions are defined below to simplify the discussions related to the latent-variable formulation of the PCP model and its various properties. 

Scalar values are denoted by lowercase letters (e.g., $x$), vectors are denoted as bold lowercase letters (e.g., $\bx$ and $\blambda$), matrices are denoted as
bold uppercase letters (e.g., $\mat{X}$), and general tensors are denoted as bold script
letters (e.g., $\ten{X}$). Dimensions of tensors are denoted as uppercase letters (e.g., $P$) and index sets are denoted as $[P] = \{1,\dots,P\}$. In the setting of the PCP model, count data tensors are defined over $\mathbb{N}_{0}$, the set of natural numbers, including 0, and parameter tensors are defined over $\mathbb{R}_{+}$, the set of positive real values. When defining notation or discussing tensors and tensor properties in general, we define tensors over $\mathbb{R}$, the set of real values, which include both $\mathbb{N}_{0}$ and $\mathbb{R}_{+}$ as subsets. 

A non-negative rank-one $P$-way tensor is defined as the outer product of $P$ vectors containing only non-negative elements. The \emph{non-negative rank} of a non-negative tensor is the smallest number of non-negative rank one tensors whose sum equals the original tensor~\citep{LiCo09}. In this work, we refer to the non-negative tensor rank of a non-negative tensor simply as the \emph{rank} of the tensor unless the context is unclear.

Let $N = N_1 N_2 \cdots N_P$ denote the product of the $P$ dimensions of the tensor $\ten{X} \in \mathbb{R}^{N_1 \times N_2 \times \cdots \times N_P} $.
The quantities $\vecc(\ten{X})\in \mathbb{R}^{N}$ and $\mat{X}_{(p)} \in \mathbb{R}^{N_p \times N/N_p} $ denote the vectorization of $\ten{X}$
and the matricization of the $p$th mode of the tensor $\ten{X}$, respectively. By $\dv(\ten{X})$ we denote the $N\times N$ diagonal matrix with $\vecc(\ten{X})$ in its diagonal, so that $\dv(\ten{X})\bone_{N} = \vecc(\ten{X})$, where $\bone_{N}$ is a vector of length $N$ containing all ones. 
Whenever $\ten{X}$ and $\ten{Y}$ are tensors of the same size, $\ten{X}*\ten{Y}$ and $\ten{X}\oslash\ten{Y}$ denote their element-wise (i.e., Hadamard) product and element-wide division, respectively. Similarly, we denote $\ten{X}^{*k}$ as the $k$th Hadamard power, which is applied element-wise to the values in the tensor. In addition, $\log(\mat{X})$ and $\log(\ten{X})$ apply the logarithm function element-wise to matrices and tensors, respectively.
The mode-$k$ product of tensor $\ten{X}$ and vector $\ba$ (which has length equal to $N_k$, the size of dimension $k$ of $\ten{X}$), also called a contraction of $\ten{X}$ along mode $k$, is denoted as $\ten{X}\bar\times_k\ba$ and results in a tensor of one dimension less than $\ten{X}$.
Leveraging this notation, we denote 
$
\ten{X} \bar\times_{q\neq k} (\ba_{q})
$
as the contraction of all but the $k$th mode of $\ten{X}$ with the vectors $\ba_q\in\mbbR^{N_q}$, so that the result is a vector of size $N_k$. Similarly, when $P>2$ we denote 
$
\ten{X} \bar\times_{q\neq k,l} (\ba_{q})
$
as the contraction of all but the $k$th and $l$th modes of $\ten{X}$ with the vectors $\ba_q$, so that the result is a matrix of size $N_k\times N_l$ when $k<l$.

The Kronecker product of the vectors $\vct{a} \in \mathbb{R}^m$ and $\vct{b} \in \mathbb{R}^n$ is denoted by $\vct{c} = \vct{a} \otimes \vct{b} \in \mathbb{R}^{mn}$ with $c_k = a_ib_j$ where $k = (i-1)n + j$. 
The Hadamard product of matrices $\mat{A} \in \mathbb{R}^{m \times r}$ and $\mat{B} \in \mathbb{R}^{m \times r}$ is denoted by $\mat{C} = \mat{A} * \mat{B} \in \mathbb{R}^{m \times r}$ with $\mat{C}(i, j) = \mat{A}(i, j) * \mat{B}(i, j)$ for all $i \in \{1, \dots, m\}$ and $j \in \{1, \dots, r\}$. 
In contrast, the Khatri-Rao product, or Kronecker product in columns, of the matrices $\mat{A}\in \mat{R}^{m \times r}$ and $\mat{B}\in \mathbb{R}^{n \times r}$ is denoted by $\mat{C} = \mat{A}\odot \mat{B} = \left[\vct{a}_{1} \otimes \vct{b}_{1} \quad \vct{a}_{2} \otimes \vct{b}_{2} \quad \cdots \quad \vct{a}_{r} \otimes \vct{b}_{r}\right] \in \mathbb{R}^{mn \times r}$, where $\vct{a}_{i}$ and $\vct{b}_{i}$ are the $i$th columns of $\mat{A}$ and $\mat{B}$, respectively. 
We also apply these various operators to ordered indexed sets of vectors, matrices, or tensors. 
For example, $\odot\mat{A}_{[P]}  = \mat{A}_P \odot \cdots \odot \mat{A}_1$  
is the Khatri-Rao product of $P$ matrices with indices in $[P]$ in \emph{reverse} order. More generally, the subscript $[P]$ can be any ordered set, and the products are computed with indices from that ordered set in decreasing order. Notably, we make use of following: $\odot\mat{A}_{[P]\setminus\{k\}}  = \mat{A}_P \odot \cdots \odot \mat{A}_{k+1} \odot \mat{A}_{k-1} \odot \cdots \odot \mat{A}_1$ and $\odot\mat{A}_{[P]\setminus\{k, l\}}  = \mat{A}_P \odot \cdots \odot \mat{A}_{l+1} \odot \mat{A}_{l-1} \odot \cdots \odot \mat{A}_{k+1} \odot \mat{A}_{k-1} \odot \cdots \odot \mat{A}_1$, where $k<l$. 
Similarly, we make use of the following Hadamard product $*\blambda_{[P]} = \blambda_{1} * \cdots * \blambda_{P}$. Because the Hadamard product is commutative, in this case the set $[P]$ does not need to be ordered. Notable examples are $*\blambda_{[P]\setminus\{k\}} = \blambda_{1} * \cdots * \blambda_{k-1} * \blambda_{k+1} * \cdots * \blambda_{P}$ and $*\blambda_{[P]\setminus\{k, l\}} = \blambda_{1} * \cdots * \blambda_{k-1} * \blambda_{k+1} * \cdots * \blambda_{l-1} * \blambda_{l+1} * \cdots * \blambda_{P}$ with $k<l$.
Such vectors appear in this work as the sums of columns of the factor matrices of CP decompositions, which we denote as $\blambda_q = \mathbf{A}_q^\top\bone_{N_q} \in \mathbb{R}^{R}$ for $\mathbf{A}_q \in \mathbb{R}^{N_q \times R}$.

\section{PCP Inference}\label{sec:genrank}
In this section, we introduce parameter inference of the PCP model when the rank of the parameter tensor $R$ is larger than one and defer the special case of $R=1$ to Section~\ref{sec:rank1}, as the latter case leverages several key simplifications with respect to parameter inference.

As noted previously, the sum inside the logarithms of the logarithmic likelihood function of the PCP model in Equation~\eqref{eq:llik1} entails a challenging parameter inference problem. 
Our approach is to treat the PCP model as a latent-variable model, which leads to a complete loglikelihood $\ell_c$ that does not contain a sum inside the two logarithms. 
However, $\ell_c$ cannot be used for parameter inference directly because it involves unobserved data, and so we first derive the conditional expectation of the complete loglikelihood $\ell_c$ in Section~\ref{sec:cpaprLatent}. This quantity is then used to calculate the maximum likelihood estimators in Section~\ref{subsec:EM} and to obtain the observed and expected Fisher information matrices in Section~\ref{subsec:FIMgenrank}.

\subsection{Latent variable formulation}\label{sec:cpaprLatent}

Our latent variable is a tensor $\ten{Z} \in \mathbb{N}_{0}^{R\times N_1\times N_2\times\dots\times N_P} $ of dimension one larger than $\ten{X}$ as defined in Equation~\eqref{eq:pcp1} where
\begin{subequations}
\begin{equation}\label{eq:latent1}
z_{r,\bi} \sim \Poi\left(\prod_{p=1}^P \mat{A}_p(i_p,r)\right) \text{for } r=1,\ldots, R\,.
\end{equation}
Summing over the first dimension of the latent tensor $\ten{Z}$ implies
\begin{equation}\label{eq:latent2}
\sum_{r=1}^R z_{r,\bi} \sim \Poi\left(\sum_{r=1}^R\prod_{p=1}^P \mat{A}_p(i_p,r)\right),
\end{equation}
\end{subequations}
because the sum of independent Poisson distributed random variables is also Poisson distributed, and
imply that marginalizing $\ten{Z}$ in the first dimension
matches the distribution of $x_{\bi}$ in Equation~\eqref{eq:pcp1}.
In contrast, this implies that a PCP-distributed tensor $\ten{X}$ can be understood as arising from a latent tensor $\ten{Z}$ of a dimension one larger than $\ten{X}$.
In such a context, Orchard and Woodbury state that a \textit{relatively simple analysis is transformed into a complex one just because some of the information is missing}~\citep{orchardandwoodbury72}. In our case, if $\ten{Z}$ were observed, the parameter inference for $\btheta$ would be simpler. This is because the loglikelihood for \eqref{eq:latent1},
\begin{equation}\label{eq:em_completell}
\ell_c(\bz|\btheta)  = \sum_{r,\bi} \left[z_{r,\bi}\log\left(\prod_{p=1}^P \mat{A}_p(i_p,r)\right)
-\left(\prod_{p=1}^P \mat{A}_p(i_p,r)\right)   -\log(z_{r,\bi}!)
\right] \; ,
\end{equation}
does not contain a sum inside the two logarithms in contrast to the likelihood \eqref{eq:llik1}. 
Unfortunately, the complete loglikelihood $\ell_c$ involves entries of the unobserved tensor $\ten{Z}$.
In this article, we leverage the standard tools developed to perform parameter inference for $\btheta$ that are based on the complete loglikelihood $\ell_c$ instead of $\ell$.
Each $m_{\bi}$ in the likelihood \eqref{eq:llik1} represents a parameter of a Poisson distribution; therefore, we require that all $m_{\bi}$
and all entries of $\mat{A}_p$ in the latent variable formulation \eqref{eq:latent1} are positive so that every entry in $\btheta$ is also positive.

Now that we have an explicit representation for the complete loglikelihood, we review the standard latent-variable model for vector data and latent variables. 
Let the tensors $\ten{X}$ and $\ten{Z}$ be defined as in Equations \eqref{eq:pcp1} and \eqref{eq:latent1}. We call $\bx = \vecc(\ten{X})$ the \textit{observed-data vector} and $\bz=\vecc(\ten{Z})$ the \textit{complete-data vector}. 
Using the law of conditional probability, we can write the joint distribution of $\bx$ and $\bz$ as $p(\bz,\bx|\btheta) = p(\bx|\btheta)p(\bz|\bx,\btheta)$. Taking the logarithm on both sides and using $p(\bz,\bx|\btheta) = p(\bz|\btheta)$ (which holds because $\bx$ is fully determined by $\bz$) leads to the following loglikelihood decomposition
\begin{subequations}
\begin{equation}\label{eq:llik_completemiss}
\ell(\bx|\btheta) = \ell_c(\bz|\btheta) - \ell_m(\bz|\bx,\btheta),
\end{equation}
where
\begin{itemize}
\item $\ell(\bx|\btheta) \coloneqq \log\ p(\bx|\btheta)$ denotes the \textit{loglikelihood} in Equation \eqref{eq:llik1},
\item $\ell_c(\bz|\btheta) \coloneqq \log\ p(\bz|\btheta)$ denotes the \textit{complete loglikelihood} in Equation \eqref{eq:em_completell}, and
\item $\ell_m(\bz|\bx,\btheta) \coloneqq \log\ p(\bz|\bx,\btheta)$ denotes the \textit{missing loglikelihood}.
\end{itemize}

For PCP models with parameter tensors of rank one, i.e., Equation~\eqref{eq:pcp1} with $R=1$, there are no latent variables; thus,  $\ten{Z}=\ten{X}$ and $\ell(\bx|\btheta) = \ell_c(\bz|\btheta)$ because $p(\bx|\bx,\btheta)=1$. 
This special case is studied in detail in Section \ref{sec:rank1}.

Equation \eqref{eq:llik_completemiss} divides the loglikelihood $\ell$ into the complete loglikelihood $\ell_c$ and missing loglikelihood $\ell_m$, which both involve unobserved data $\bz$. Since $\bz$ is not observed, we can take the conditional expectation of both sides of \eqref{eq:llik_completemiss} with respect to the conditional distribution of $\bz$ given  $\bx$, which leads to the conditional expectation version of the likelihood decomposition \eqref{eq:llik_completemiss}
\begin{equation}\label{eq:llik_completemiss2}
\ell(\bx|\btheta) = \underbrace{\mbbE[\ell_c(\bz|\btheta) | \bx,\bar\btheta ]}_{Q(\btheta,\bar\btheta)} - \underbrace{\mbbE[\ell_m(\bz|\bx,\btheta) | \bx,\bar\btheta ]}_{H(\btheta,\bar\btheta)},
\end{equation}
\end{subequations}
where the equality holds for any value of $\bar\btheta$ contained in the same parameter space as $\btheta$ and we used the identity
$$
\mbbE[\ell(\bx|\btheta) | \bx,\bar\btheta ] \coloneqq
\int
\ell(\bx|\btheta) p(\bz|\bx,\bar\btheta) \, \text{d}\bz
= 
\ell(\bx|\btheta)
\left[\int 
 p(\bz|\bx,\bar\btheta) \,\text{d}\bz\right]
  = 
  \ell(\bx|\btheta)\,.
$$

The expected complete loglikelihood $Q(\btheta,\bar\btheta)$  of Equation \eqref{eq:llik_completemiss2} is a fundamental quantity in latent-variable models because it enables us to perform likelihood-based inference without using the loglikelihood function. The next result determines $Q(\btheta,\bar\btheta)$ for our PCP latent-variable model. 
The proof is elementary and depends upon the relationship between Poisson and multinomial distributions. We include it here for sake of completeness and because many in the tensor research community may be unfamiliar with these probabilistic results. 

\begin{lemma}\label{thm:Qfunc}
Consider the latent-variable formulation of the PCP model in Equations~\eqref{eq:pcp1}-\eqref{theta-vec} with $\btheta = [\vecc(\mat{A}_1)^\top\dots\vecc(\mat{A}_P)^\top]^\top$, $\bar\btheta = [\vecc(\mathbf{\bar A}_1)^\top\dots\vecc(\mathbf{\bar A}_P)^\top]^\top$, and the notation of \S \ref{sec:notation}.
Then the expected complete loglikelihood $Q(\btheta,\bar\btheta)$ is
\begin{equation}\label{eq:Qfunc1}
    Q(\btheta,\bar\btheta)  = \sum_{r,\bi} \left[\bar z_{r,\bi}\log\left(\prod_{p=1}^P \mat{A}_p(i_p,r)\right)-
\left(\prod_{p=1}^P \mat{A}_p(i_p,r)\right)  
\right]-C_1(\bar\btheta) ,
\end{equation}
where  $C_1(\bar\btheta) = \sum_{r,\bi}\mbbE_{\bz|\bx,\bar\btheta}\log(z_{r,\bi}!)$ does not depend on $\btheta$ and
$$
\bar z_{r,\bi} \coloneqq
\mbbE_{\bz|\bx,\bar\btheta} \left(z_{r,\bi}\right) 
=
\left(\prod_{p=1}^P \bar{ \mat{A}}_p(i_p,r)\right)
\dfrac{x_{\bi}}
{\sum_{r=1}^R \prod_{p=1}^P \bar{ \mat{A}}_p(i_p,r)}.
$$

\end{lemma}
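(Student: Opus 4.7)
The plan is to take the conditional expectation $\mathbb{E}[\,\cdot\, | \bx,\bar\btheta]$ of the complete loglikelihood \eqref{eq:em_completell} term by term. Observe that $\ell_c(\bz|\btheta)$ decomposes into (i) a term that is linear in $\bz$, namely $\sum_{r,\bi} z_{r,\bi}\log(\prod_p \mat{A}_p(i_p,r))$; (ii) a term $-\sum_{r,\bi}\prod_p \mat{A}_p(i_p,r)$ that does not involve $\bz$; and (iii) a term $-\sum_{r,\bi}\log(z_{r,\bi}!)$ that does not involve $\btheta$. Linearity of expectation lets the first term pass through with $z_{r,\bi}$ replaced by $\bar z_{r,\bi}$; the second term is unchanged; the third becomes the $\btheta$-free quantity $C_1(\bar\btheta)$. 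The lemma therefore reduces entirely to identifying the conditional mean $\bar z_{r,\bi} = \mathbb{E}_{\bz|\bx,\bar\btheta}(z_{r,\bi})$.

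To obtain $\bar z_{r,\bi}$, I would invoke the standard Poisson--multinomial conditioning identity: if $Y_1,\ldots,Y_R$ are independent with $Y_r\sim\Poi(\lambda_r)$ and $\lambda=\sum_r \lambda_r$, then conditional on $\sum_r Y_r = n$, the vector $(Y_1,\ldots,Y_R)$ is multinomial with $n$ trials and probabilities $(\lambda_1/\lambda,\ldots,\lambda_R/\lambda)$. Applied here, under $\bar\btheta$ the entries $z_{r,\bi}$ are independent across $(r,\bi)$ with $z_{r,\bi}\sim\Poi(\prod_p \bar{\mat A}_p(i_p,r))$, and $x_{\bi}=\sum_r z_{r,\bi}$ from \eqref{eq:latent2}. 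Conditioning on $\bx$, the joint distribution of $\ten Z$ factors across $\bi$ (since each $x_{\bi}$ constrains only its own fiber of $\ten Z$), and for fixed $\bi$ the conditional law of $(z_{1,\bi},\ldots,z_{R,\bi})$ given $x_{\bi}$ is multinomial with $x_{\bi}$ trials and probabilities
$$
p_{r,\bi} \;=\; \frac{\prod_p \bar{\mat A}_p(i_p,r)}{\sum_{r'=1}^{R}\prod_p \bar{\mat A}_p(i_p,r')}.
$$
The marginal mean of a multinomial component then gives $\bar z_{r,\bi}=x_{\bi}\,p_{r,\bi}$, which is precisely the stated expression.

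Combining these observations yields \eqref{eq:Qfunc1}. The only substantive ingredient is the Poisson--multinomial conditioning identity; everything else is linearity of expectation and careful bookkeeping. The mildest subtlety to check is the conditional independence across different multi-indices $\bi$, which is immediate from the prior mutual independence of the $\{z_{r,\bi}\}$ and from the fact that $\bx$ decouples into pieces, each depending on a disjoint slice of $\ten Z$. No part of the argument is difficult; the lemma is essentially a translation of the Poisson--multinomial fact into tensor notation.
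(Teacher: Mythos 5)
Your proposal is correct and follows essentially the same route as the paper: reduce the lemma to computing $\bar z_{r,\bi}$ via linearity of expectation, then identify the conditional law of the fiber $(z_{1,\bi},\dots,z_{R,\bi})$ given $x_{\bi}$ as multinomial with probabilities $p_{r,\bi}$ and read off its mean. The only cosmetic difference is that the paper derives the Poisson--multinomial conditioning identity explicitly by taking the ratio of PMFs, whereas you cite it as a standard fact (and you make the decoupling across distinct $\bi$ slightly more explicit).
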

\begin{proof}
The proof follows from the definition of $Q(\btheta,\bar\btheta)$ in Equation \eqref{eq:llik_completemiss2}, and the complete loglikelihood $\ell_c(\bz|\btheta)$ in Equation \eqref{eq:em_completell}.  To find $\bar z_{r,\bi}$, first let $\bz_{\bi} = [z_{1,\bi}, \cdots,z_{R,\bi}]^\top$. Next, we will identify the random variable $\bz_{\bi}|x_{\bi}$. This is the random variable that we will take our expectation over when finding $Q(\btheta,\bar\btheta)$, and its loglikelihood, added over all $\bi$, corresponds to the missing loglikelihood $\ell_m(\bz|\bx,\btheta)$ of Equation \eqref{eq:llik_completemiss}. It follows that 
\begin{equation*}\label{eq:emconditional2}
\begin{aligned}
p(\bz_{\bi}|x_{\bi})  =
 \dfrac{p(\bz_{\bi},x_{\bi})}{p(x_{\bi})}
 =
 \dfrac{p(\bz_{\bi})}{p(x_{\bi})}
&=
\dfrac{
\prod_{r=1}^R (z_{r,\bi}!)^{-1}\left(\prod_{p=1}^P \mat{A}_p(i_p,r)\right)^{z_{r,\bi}}
}{
(x_{\bi}!)^{-1}\left(\sum_{r=1}^R \prod_{p=1}^P \mat{A}_p(i_p,r)\right)^{x_{\bi}}
}
\\&=
\frac{x_{\bi}}{z_{1,\bi}!\dots z_{R,\bi}!}
(p_{1,\bi})^{z_{1,\bi}}
\dots 
(p_{R,\bi})^{z_{R,\bi}}
\end{aligned}
\end{equation*}
is the probability mass function (PMF) of a multinomial distribution with number of trials $x_{\bi}$ and mutually exclusive event probabilities $p_{1,\bi},\dots,p_{R,\bi}$ where $\prod_{r=1}^R (z_{R,\bi}!)^{-1}/\big(x_{\bi}!\big)^{-1}=x_{\bi}/(z_{1,\bi}!\dots z_{R,\bi}!)$ and $ p_{r,\bi} = \prod_{p=1}^P \mat{A}_p(i_p,r)/(\sum_{r=1}^R \prod_{p=1}^P \mat{A}_p(i_p,r))$. 
Hence we write
\begin{equation*}\label{eq:conditionalmulti}
\bz_{\bi}|x_{\bi}
\sim 
\text{Multinomial}(x_{\bi} ; p_{1,\bi},\dots,p_{R,\bi})\,,
\end{equation*}
which implies each component of $\bz_{\bi}|x_{\bi}$ has mean $\bar z_{r,\bi}=x_{\bi} \bar p_{r,\bi}$.
\end{proof}

The expected complete loglikelihood $Q(\btheta,\bar\btheta)$ of Lemma \ref{thm:Qfunc} will be used in the remainder of this section to perform maximum likelihood estimation, and to obtain the Fisher information matrix of the PCP model.

\subsection{Maximum likelihood estimation }\label{subsec:EM}

We can compute the maximum likelihood estimator of the PCP model by optimizing $\ell(\bx|\btheta)$, which is a nonlinear nonconvex function. 
The next result reformulates Lemma \ref{thm:Qfunc} in terms of matrices, revealing that $Q(\btheta,\bar\btheta)$ is a sum of $P$ concave subproblems, which  leads to a simpler optimization problem than maximizing $\ell(\bx|\btheta)$.

\begin{theorem}\label{thm:Qfunc2}
   The expected complete loglikelihood $Q(\btheta,\bar\btheta)$ of Equation \eqref{eq:Qfunc1} can be equivalently written as
\begin{subequations}
\begin{align}
Q(\btheta,\bar\btheta)  
 &= \bone_{N_p}^\top \Big[
\mathbf{\bar Z}_p*\log\left(\mathbf{A}_p\right)
 -
 \mathbf{A}_p\diag(*\blambda_{[P]\setminus\{p\}})
\Big]\bone_{R}
+C_2(\bar\btheta,\btheta_{[P]\setminus\{p\}}), \label{eq:Qfunc2} 
\end{align}
which holds for any for $p\in [P]$, and
\begin{equation}\label{eq:Zmat}
\mathbf{\bar Z}_p =
 \mathbf{\bar A}_p * \left([\mat{X}_{(p)}\oslash ( \mat{\bar A}_p(\odot\mathbf{\bar A}_{[P]\setminus\{p\}})^\top)] \odot\mathbf{\bar A}_{[P]\setminus\{p\}}\right).\, 
\end{equation}
\end{subequations}
The term
\begin{equation}
C_2(\bar\btheta,\btheta_{[P]\setminus\{p\}}) =   \sum_{q=1, q\neq p}^P\bone_{N_q}^\top  \left(\bar{\mat{Z}}_q*\log\left(\mat{A}_q\right)\right)\bone_{R}-C_1(\bar\btheta)
\end{equation}
does not depend on $\mathbf{A}_p$. Furthermore, $Q(\btheta,\bar\btheta)$ is concave with respect to $\mathbf{A}_{p}$ when factor matrices $\mathbf{A}_{1}, \dots, \mathbf{A}_{p-1}, \mathbf{A}_{p+1}, \dots, \mathbf{A}_{P}$ are fixed.
\end{theorem}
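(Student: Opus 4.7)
The plan is to start from the entrywise expression for $Q(\btheta,\bar\btheta)$ in Lemma~\ref{thm:Qfunc} and recast each of its two sums in matrix form by singling out the dependence on $\mat{A}_p$. Splitting $\log\big(\prod_{q=1}^P \mat{A}_q(i_q,r)\big) = \sum_{q=1}^P \log \mat{A}_q(i_q,r)$ separates the first sum of $Q$ into a $\log\mat{A}_p$ piece and $P-1$ analogous pieces involving $\mat{A}_q$ for $q\neq p$; the latter, together with $-C_1(\bar\btheta)$, will be collected into $C_2(\bar\btheta,\btheta_{[P]\setminus\{p\}})$, and are handled in the same way as the $p$th term just by relabeling. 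Hence the substantive work concerns only the $p$th log-piece and the linear term in $\mat{A}_p$.

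Writing $\bi_{-p}$ for the multi-index $\bi$ with its $p$th coordinate removed, the $\log\mat{A}_p$ contribution can be collapsed by marginalization. If I define $\mathbf{\bar Z}_p$ entrywise by $\mathbf{\bar Z}_p(i_p,r) = \sum_{\bi_{-p}} \bar z_{r,\bi}$, then
\begin{equation*}
\sum_{r,\bi} \bar z_{r,\bi} \log \mat{A}_p(i_p,r) \;=\; \bone_{N_p}^\top\big[\mathbf{\bar Z}_p * \log\mat{A}_p\big]\bone_R.
\end{equation*}
The main computation is then to verify that this $\mathbf{\bar Z}_p$ matches Equation~\eqref{eq:Zmat}. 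Substituting the closed form for $\bar z_{r,\bi}$ from Lemma~\ref{thm:Qfunc} and factoring out $\bar{\mat{A}}_p(i_p,r)$, the remaining sum reads $\sum_{\bi_{-p}} x_{\bi}\, m_{\bi}(\bar\btheta)^{-1}\prod_{q\neq p}\bar{\mat{A}}_q(i_q,r)$. Here $m_{\bi}(\bar\btheta)$ is the $(i_p,\bi_{-p})$ entry of the mode-$p$ CP matricization $\bar{\mat{A}}_p(\odot\bar{\mat{A}}_{[P]\setminus\{p\}})^\top$, $x_{\bi}$ is the $(i_p,\bi_{-p})$ entry of $\mat{X}_{(p)}$, and $\prod_{q\neq p}\bar{\mat{A}}_q(i_q,r)$ is the $(\bi_{-p},r)$ entry of $\odot\bar{\mat{A}}_{[P]\setminus\{p\}}$ by the definition of the Khatri--Rao product. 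Substituting and multiplying back by $\bar{\mat{A}}_p$ elementwise recovers Equation~\eqref{eq:Zmat}.

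For the linear sum I would exploit separability of the product across modes: $\sum_{\bi}\prod_q \mat{A}_q(i_q,r) = \mat{A}_p(i_p,r)\prod_{q\neq p}\blambda_q(r)$, using $\blambda_q(r) = \sum_{i_q}\mat{A}_q(i_q,r)$. Identifying $\prod_{q\neq p}\blambda_q(r)$ with the $r$th entry of $*\blambda_{[P]\setminus\{p\}}$ and summing over $i_p$ and $r$ produces $\bone_{N_p}^\top\mat{A}_p\diag(*\blambda_{[P]\setminus\{p\}})\bone_R$. Concavity in $\mat{A}_p$ when the other factors are held fixed then follows directly: $\bone_{N_p}^\top[\mathbf{\bar Z}_p * \log\mat{A}_p]\bone_R$ is a non-negative linear combination of concave scalar functions $\log\mat{A}_p(i_p,r)$ (entries of $\mathbf{\bar Z}_p$ are non-negative as sums of conditional expectations of multinomial counts from Lemma~\ref{thm:Qfunc}), $-\bone_{N_p}^\top\mat{A}_p\diag(*\blambda_{[P]\setminus\{p\}})\bone_R$ is linear in $\mat{A}_p$, and $C_2$ is constant in $\mat{A}_p$ by construction.

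The principal obstacle I anticipate is the bookkeeping in the second paragraph: specifically, verifying that the orderings of $\bi_{-p}$ induced by the mode-$p$ matricization $\mat{X}_{(p)}$ and by the Khatri--Rao product $\odot\bar{\mat{A}}_{[P]\setminus\{p\}}$ defined in Section~\ref{sec:notation} coincide, so that the elementwise division and subsequent matrix multiplication in Equation~\eqref{eq:Zmat} genuinely implement the index collapse $\sum_{\bi_{-p}}$. Once that alignment is checked, the rest of the proof amounts to substitution and recognition of standard matrix--vector identities.
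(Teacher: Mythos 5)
Your proposal is correct and follows essentially the same route as the paper's proof: splitting the logarithm of the product into a sum over modes, defining $\mathbf{\bar Z}_p$ by contracting the conditional-mean tensor over all modes except $p$, and using the separability $\sum_{r,\bi}\prod_q \mat{A}_q(i_q,r) = \blambda_p^\top(*\blambda_{[P]\setminus\{p\}})$ for the linear term. You supply more detail than the paper (notably the explicit index-matching for Equation~\eqref{eq:Zmat} and the concavity argument), and the only blemish is a notational slip where $\sum_{\bi}$ should read $\sum_{\bi_{-p}}$ in your separability identity, which your subsequent summation over $i_p$ makes clear was intended.
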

\begin{proof}
This proof follows from 
$\log(\prod_{p=1}^P \mat{A}_p(i_p,r)) = \sum_{p=1}^P\log\left( \mat{A}_p(i_p,r)\right)$, and recognizing that \newline
$\sum_{r,\bi} 
\prod_p \mat{A}_p(i_p,r)  = \blambda_p^\top (*\blambda_{[P]\setminus\{p\}})$ for any $p\in [P]$. The tensor $\mbbE_{\bz|\bx,\bar\btheta} \left(\ten{Z}\right)$ with entries $\bar z_{r,\bi}$ has matrix form $\mathbf{\bar Z}_p$ after contracting $P-1$ of its modes:
$$
\mathbf{\bar Z}_p(i_p,r) = 
\sum_{i_{1}=1}^{N_1} \cdots 
\sum_{i_{p-1}=1}^{N_{p-1}}
\sum_{i_{p+1}=1}^{N_{p+1}} \cdots
\sum_{i_{P}=1}^{N_P}
\bar z_{r,\bi}\; .
$$

\end{proof}

As mentioned in Section~\ref{sec:relatedwork}, CP models include an an inherent indeterminacy due to a choice needed for the columns of the factor matrices. Corollary~\ref{cor:Qfunc} shows that the standard choice used in existing CP parameter inference algorithms~\cite[see, e.g.,]{chiandkolda12} leads to a useful interpretation of the expression for $Q(\btheta,\bar\btheta)$ in Theorem~\ref{thm:Qfunc}.

\begin{corollary}\label{cor:Qfunc}
 If $*\blambda_{[P]\setminus\{p\}} = \bone_R$ for any $p \in [P]$---i.e., the columns of $\mat{A}_p$ contain the weights of all the multilinear products in Equation~\eqref{eq:pcp1}, then $Q(\btheta,\bar\btheta)$ is identical to the loglikelihood of $\mat{\bar Z}_p \sim \Poi(\mat{A}_p)$ up to a constant.
\end{corollary}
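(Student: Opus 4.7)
The plan is to start from the matricized expression for $Q(\btheta,\bar\btheta)$ supplied by Theorem~\ref{thm:Qfunc2}, specialize it to the hypothesis $*\blambda_{[P]\setminus\{p\}} = \bone_R$, and then compare the resulting formula directly against the loglikelihood of an element-wise Poisson model $\mathbf{\bar Z}_p \sim \Poi(\mat{A}_p)$. The argument is essentially a bookkeeping exercise, so I do not anticipate a substantive obstacle; the only genuine subtlety is that the entries of $\mathbf{\bar Z}_p$, obtained as conditional multinomial expectations in Lemma~\ref{thm:Qfunc}, are in general real-valued rather than integer.

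First I would set $\diag(*\blambda_{[P]\setminus\{p\}}) = \mat{I}_R$ in Equation~\eqref{eq:Qfunc2}, so that $\bone_{N_p}^\top \mat{A}_p \diag(*\blambda_{[P]\setminus\{p\}}) \bone_R$ collapses to $\sum_{i_p,r} \mat{A}_p(i_p,r)$. What remains is
\begin{equation*}
Q(\btheta,\bar\btheta) = \sum_{i_p,r}\Big[\mathbf{\bar Z}_p(i_p,r)\log \mat{A}_p(i_p,r) - \mat{A}_p(i_p,r)\Big] + C_2(\bar\btheta,\btheta_{[P]\setminus\{p\}}).
\end{equation*}
Next I would write the loglikelihood of the element-wise Poisson model with ``observations'' $\mathbf{\bar Z}_p(i_p,r)$ and rates $\mat{A}_p(i_p,r)$ as
\begin{equation*}
\ell_{\Poi}(\mathbf{\bar Z}_p \mid \mat{A}_p) = \sum_{i_p,r}\Big[\mathbf{\bar Z}_p(i_p,r)\log \mat{A}_p(i_p,r) - \mat{A}_p(i_p,r) - \log\Gamma(\mathbf{\bar Z}_p(i_p,r)+1)\Big],
\end{equation*}
and then read off the difference
\begin{equation*}
Q(\btheta,\bar\btheta) - \ell_{\Poi}(\mathbf{\bar Z}_p \mid \mat{A}_p) = C_2(\bar\btheta,\btheta_{[P]\setminus\{p\}}) + \sum_{i_p,r}\log\Gamma(\mathbf{\bar Z}_p(i_p,r)+1),
\end{equation*}
which depends only on $\bar\btheta$ and the fixed factor matrices $\{\mat{A}_q : q \neq p\}$, and is therefore a constant with respect to $\mat{A}_p$.

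The non-integrality of $\mathbf{\bar Z}_p$ means that ``the loglikelihood of $\mathbf{\bar Z}_p \sim \Poi(\mat{A}_p)$'' must be interpreted via the natural log-gamma extension of the Poisson log-mass function; because this factorial-style term does not depend on $\mat{A}_p$, it is harmlessly absorbed into the additive constant and the stated equivalence goes through. As a bonus, this identification gives an independent confirmation of the concavity-in-$\mat{A}_p$ claim in Theorem~\ref{thm:Qfunc2}, since the Poisson negative loglikelihood is strictly convex in its rate parameter, and it also sets up the interpretation of the M-step in Section~\ref{subsec:EM} as a sequence of standard Poisson regressions on the imputed tensor $\mathbf{\bar Z}_p$.
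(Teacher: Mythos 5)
Your proposal is correct and follows essentially the same route as the paper: specialize Equation~\eqref{eq:Qfunc2} by replacing $\diag(*\blambda_{[P]\setminus\{p\}})$ with the identity and match the result term-by-term against the Poisson loglikelihood, absorbing the factorial-type term into the additive constant. Your additional remark that the entries of $\mathbf{\bar Z}_p$ are real-valued, so the ``loglikelihood'' must be read through the log-gamma extension of the Poisson log-mass, is a point the paper's proof silently glosses over and is a worthwhile clarification, but it does not change the argument.
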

\begin{proof}
The loglikelihood of $\mat{\bar Z}_p \sim \Poi(\mat{A}_p)$ is
$$
\bone_{N_p}^\top \Big[
\mathbf{\bar Z}_p*\log\left(\mathbf{A}_p\right)
 -
 \mathbf{A}_p
\Big]\bone_{R} + C,
$$
where $C$ does not depend on $\mat{A}_p$. This corresponds to Equation \eqref{eq:Qfunc2} (up to a constant) after noting that the term $\diag(*\blambda_{[P]\setminus\{p\}})$ identity when $*\blambda_{[P]\setminus\{p\}} =\bone_R$.
\end{proof}

A consequence of Corollary~\ref{cor:Qfunc} is that with this particular scaling of the factor matrices, parameter inference for $\mat{A}_p$ is straight forward using $Q(\btheta,\bar\btheta)$ than with the loglikelihood in Equation~\eqref{eq:llik1}
With the closed-form expression for $Q(\btheta,\bar\btheta)$ in Equation \eqref{eq:Qfunc2}, we now use it for maximum likelihood estimation.

\subsubsection{Expectation-maximization}\label{sec:EM}
Expectation-maximization (EM) algorithms \citep{dempsteretal77,mclachlanandthri07}  are a class of algorithms that are useful when optimizing the loglikelihood $\ell$ is more challenging than optimizing $Q(\btheta,\bar\btheta)$, a situation common in latent-variable models. Starting with Equation~\eqref{eq:llik_completemiss2}, we can demonstrate through Jensen's inequality that
\begin{equation}\label{eq:emalg1}
\ell(\bx|\btheta)
\geq
Q(\btheta,\bar\btheta)  - H(\bar\btheta;\bar\btheta) 
\end{equation}
holds for any $(\btheta,\bar\btheta)$. 
Equality holds when $\btheta = \bar\btheta$ and is trivially obtained from Equation \eqref{eq:llik_completemiss2}. Hence, the right-hand side of Equation \eqref{eq:emalg1} ``minorizes" the loglikelihood and depends on $\btheta$ only through $Q(\btheta,\bar\btheta)$, while the missing loglikelihood component $H(\bar\btheta;\bar\btheta)$ is constant for all $\btheta$. In this context, ``minorizes" refers to the concept used in MM (i.e., Majorize-Minimization or Minorize-Maximization) algorithms \citep{hunterandlange04}. An MM algorithm iteratively optimizes a difficult objective function (in this case $\ell$) by optimizing a simpler surrogate function that minorizes the original function (in this case $Q(\btheta,\bar\btheta)$). EM algorithms are MM algorithms that use $Q(\btheta,\bar\btheta)$ as a minorizing function to optimize $\ell(\bx|\btheta)$. 

An EM algorithm iteratively updates the estimated parameters of a statistical model by alternating between expectation (E) and maximization (M) steps. At each iteration $t$, $Q(\btheta,\bar\btheta)$ is updated using $\bar\btheta = \btheta^{(t-1)}$ during the E-step, which is then optimized in the M-step to find the new parameter estimates $\btheta^{(t)}$. That is, an EM algorithm iteratively updates $\btheta^{(t)} \leftarrow \argmax_{\btheta}Q(\btheta;\btheta^{(t-1)})$ until convergence.

\subsubsection{Expectation-conditional maximization (ECM)}\label{sec:MLEgenrank}

An expectation-conditional maximization (ECM) algorithm \citep{mengandrubin93} is a variant of EM where the parameter vector $\btheta$ is split into multiple blocks. The E-step is unchanged, but in the M-step, a series of conditional maximization (CM) steps over each block are taken, while the remaining blocks are fixed at their estimated values from the previous iteration. ECM is attractive for use with $Q(\btheta,\bar\btheta)$ in  Equation~\eqref{eq:Qfunc2}, as it is parameterized in terms of each factor matrix $\mat{A}_p$, with $p \in [P]$. Hence, each factor matrix can be assigned its own block.  Below are the details of the steps performed at iteration $t$ for this ECM algorithm.

\paragraph{E-step:} 
The E-step involves updating $Q(\btheta,\bar\btheta)$ at $\bar\btheta = \btheta^{(t-1)}$, i.e., updating the terms $\bar z_{r,\bi}$ in Equation \eqref{eq:Qfunc1} using the parameters estimated after the previous iteration $t-1$. In each iteration of a standard ECM algorithm, $P$ CM-steps will take place after one $E$ step. 

\paragraph{CM-steps:} 
The $p$-th CM-step, where $p\in [P]$,
consists of optimizing $Q(\btheta;\btheta^{(t-1)})$ in Equation \eqref{eq:Qfunc2} over $\mat{A}_p$ with $\mat{\bar Z}_p$ fixed and evaluated at $\btheta^{(t-1)}$. As stated in Theorem \ref{thm:Qfunc2}, \eqref{eq:Qfunc2} is concave, and has a global maxima at 
\begin{equation}\label{eq:CMstep1}
\mat{A}_p^{(t)}  = \mat{\bar Z}_p^{(t-1)}\diag(*\blambda_{[P]\setminus\{p\}}^{(t-1)})^{-1}.
\end{equation}
As per Corollary \ref{cor:Qfunc},  $\diag(*\blambda_{[P]\setminus\{p\}}^{(t-1)})$ reduces to an identity matrix when all the weights across columns of the factor matrices are shifted to the columns of $\mat{\bar A}_p^{(t-1)}$, which is standard practice when computing low-rank CP decompositions (see, e.g.,~\citep{chiandkolda12}).

\subsubsection{Multi-cycle ECM (MCECM)} 
A multi-cycle ECM (MCECM) algorithm \citep{mengandrubin93} allows for multiple E-steps to be performed per iteration. A cycle is defined to be one E-step followed by one CM-step, and $P$ cycles are performed per iteration. Below are the details of the steps performed during cycle $p \in [P]$ of $t$ for the MCECM algorithm.

\paragraph{E-step:}
The E-step involves updating $Q(\btheta,\bar\btheta)$ at $\bar\btheta = \btheta_{p}^{(t-\frac12)}$. Following notation from~\citet{mengandrubin93}, we define \begin{align}\label{eq:theta_MCECM}
\btheta_{p}^{(t-\frac12)} = [
\vecc(\mathbf{A}_1^{(t)})^\top 
\cdots 
\vecc(\mathbf{A}_{p-1}^{(t)})^\top 
\vecc(\mathbf{A}_{p}^{(t-1)})^\top 
\cdots 
\vecc(\mathbf{A}_P^{(t-1)})^\top
]^\top \; ,
\end{align}
i.e., the parameter vector after $p-1$ cycles have been performed, which uses the factors matrices already updated during the current iteration $t$ and those from the previous iteration $t-1$ otherwise.

\paragraph{CM-step:}
The CM-step consists of iteratively optimizing $Q(\btheta;\btheta_{p}^{(t-\frac12)})$ over $\mat{A}_p$ with the values in all of the other factor matrices fixed. Similar to  Equation~\eqref{eq:theta_MCECM}, we define
\begin{subequations}
\begin{align}\label{eq:odotminus_MCECM}
\odot\mat{A}_{[P]\setminus\{p\}}^{(t-\frac12)} = \mat{A}_{1}^{(t)} \odot \cdots \odot \mat{A}_{p-1}^{(t)} \odot \mat{A}_{p+1}^{(t-1)} \odot \cdots \odot \mat{A}_{P}^{(t-1)}, 
\end{align}
and
\begin{multline} \label{eq:lambda_MCECM}  
	*\blambda_{[P]\setminus\{p\}}^{(t-\frac12)}  = 
	\left(\mathbf{A}_{1}^{(t)}\right)^\top\bone_{N_1} * 
     \cdots * 
	\left(\mathbf{A}_{p-1}^{(t)}\right)^\top\bone_{N_{p-1}}  * \left(\mathbf{A}_{p+1}^{(t-1)}\right)^\top\bone_{N_{p+1}} * 
	\cdots * 
	\left(\mathbf{A}_{P}^{(t-1)}\right)^\top\bone_{N_P} ,
\end{multline}
which use the factor matrices already updated during the current iteration $t$ and those from the previous iteration $t-1$.
Combining Equations~\eqref{eq:CMstep1} and \eqref{eq:Zmat}, along with the MCECM updates in Equations~\eqref{eq:odotminus_MCECM} and~\eqref{eq:lambda_MCECM}, leads to the following CM-step iterative update
\begin{equation}\label{eq:MCECM}
\mat{A}_p^{(t, i)}  = \Big[\mat{A}_p^{(t, i-1)} * \left([\mat{X}_{(p)}\oslash (\mat{A}_p^{(t,i-1)} (\odot\mat{A}_{[P]\setminus\{p\}}^{(t-\frac12)})^\top)] \odot\mat{A}_{[P]\setminus\{p\}}^{(t-\frac12)}\right) \Big]\diag(*\blambda_{[P]\setminus\{p\}}^{(t-\frac12)})^{-1} \; ,
\end{equation}
\end{subequations}
where $\mat{A}_p^{(t, i)}$ defines the $i$-th CM-step iterate of the $p$-th cycle of iteration $t$ of the MCECM algorithm. Note that the CM-step iterations for computing $\mat{A}_p^{(t)}$ start with $\mat{A}_p^{(t, 0)} = \mat{A}_p^{(t-1)}$.
Similar to Equation~\eqref{eq:CMstep1}, the last term in Equation~\ref{eq:MCECM} reduces to identity if we choose the weights of $\mat{A}_p^{(t, i)}$ and $\odot\mat{A}_{[P]\setminus\{p\}}^{(t-\frac12)}$ appropriately.

\subsubsection{Recasting existing algorithms as MCECM algorithms}\label{subsec:popem}

The update in Equation \eqref{eq:MCECM} is identical to those used by \citet{leeandseung00} for \emph{nonnegative matrix factorization} (NMF) (i.e., $P=2$) and by \citet{chiandkolda12} for {canonical polyadic alternating Poisson regression} (CP-APR) tensor decompositions (i.e., for general $P$). Hence, those algorithms can be viewed as MCECM algorithms, differing only in $P$ and the order of which the cycles are iterated. 

For the $P=2$ case (i.e., NMF), $\odot\mat{A}_{[P]\setminus\{1\}} = \mat{A}_2$ and $\odot\mat{A}_{[P]\setminus\{2\}} = \mat{A}_1$, and hence the algorithm of \citet{leeandseung00} updates \eqref{eq:MCECM} in the following order
$$
\mat{A}_1^{(1,1)}\rightarrow \mat{A}_2^{(1,1)} \rightarrow \mat{A}_1^{(2,1)}\rightarrow \mat{A}_2^{(2,1)}\ \rightarrow \dots \ .
$$

For the general $P$ case (i.e., CP-APR) , calculating each $\odot\mat{A}_{[P]\setminus\{p\}}$ can incur significant computational cost for large tensor dimensions $P$ and/or dimension sizes $N_p$. For this reason, the algorithm of \citet{chiandkolda12} involves evaluating Equation \eqref{eq:MCECM} multiple times for each value of $p$, all using the same value of $\odot\mat{A}_{[P]\setminus\{p\}}^{(t-\frac12)}$. Hence, the algorithm of \citet{chiandkolda12} updates \eqref{eq:MCECM} in the following order
$$
\underbrace{\mat{A}_1^{(1,1)}\rightarrow\dots \rightarrow \mat{A}_1^{(1,k)} }_{\text{update }\mat{A}_1}
\rightarrow 
\underbrace{\mat{A}_2^{(1,1)}\rightarrow\dots \rightarrow \mat{A}_2^{(1,k)} }_{\text{update }\mat{A}_2}
\rightarrow \dots \rightarrow 
\underbrace{\mat{A}_P^{(1,1)}\rightarrow\dots \rightarrow \mat{A}_P^{(1,k)} }_{\text{update }\mat{A}_P} 
\rightarrow \dots
$$
for some specified maximum number of CM-step iterations $k$.
Hence, \citep{leeandseung00} is the $P=2$ special case of \citep{chiandkolda12}, where Equation \eqref{eq:MCECM} is only evaluated once in each ECM cycle. 

Although we identify NMF and CP-APR as MCECM algorithms, neither were derived as such nor were explicitly associated with a latent-variable formulation of a Poisson model as in Equation~\eqref{eq:latent2}. With this new derivation, both NMF and CP-APR inherit the properties of MCECM algorithms---most notably the convergence proof derived by~\citet{mengandrubin93} and the specification of the Fisher information of the underlying statistical models associated with these algorithms, which we derive in the next section.

\subsection{Fisher information}\label{subsec:FIMgenrank}

Fisher information is a fundamental quantity in statistics that quantifies the amount of information that can be inferred about an unknown parameter of a statistical model. 
There are two forms of Fisher information: the observed Fisher information and the expected Fisher information.
The observed Fisher information 
$$
\FIM_{obs}(\btheta,\bx) \coloneqq -\nabla^2_{\btheta,\btheta}\ell(\btheta).
$$
is calculated using the data and measures the information that an instance of observed data $\bx=\vecc(\ten{X})$ provides about the parameter $\btheta$, and its sensitivity to changes in the parameter based on the data.
In contrast, the expected Fisher information 
$$
\FIM(\btheta) \coloneqq \mathbb{E}\left[ \FIM_{obs}(\btheta,\bx)\right],
$$
reflects the information that is obtained from the entire population according to the model where the expectation is taken with respect to $\bx$ under the model parameterized by $\btheta$. 
The expected Fisher information does not depend upon the observed data but rather on the theoretical distribution of the random variable. 
Hence, it provides insight into the potential information that could be gleaned about the parameter from the model.

Similar to the problem of finding maximum likelihood estimators, finding the Fisher information matrix for the rank one case is easier than for the general rank because it corresponds to a complete-data problem. 
Here we will study the general rank problem and leave the rank one case for Section \ref{subsec:FIM1}.

\subsubsection{Missing information principle and Oakes' theorem}\label{sec:missinginfo}

Unlike second-order optimization algorithms such as Newton-Raphson, the Hessian of the loglikelihood is not necessary in the EM algorithm. 
However, this Hessian is important to obtain the observed and expected Fisher information matrices. 
We obtain the Hessian through the expected complete loglikelihood $Q(\btheta,\bar\btheta)$ of Equation \eqref{eq:Qfunc2}, which avoids the difficulty in
directly differentiating the loglikelihood.
The negative Hessian of the loglikelihood decomposition \eqref{eq:llik_completemiss2} and the definition of observed Fisher information enable us to define 
\begin{align}
\FIM_{obs}(\btheta,\bar\btheta,\bx)& \coloneqq -
\dfrac{\partial^2}{\partial \btheta\, \partial \btheta^\top} 
Q(\btheta,\bar\btheta)
+ \dfrac{\partial^2}{\partial \btheta\, \partial \btheta^\top} 
H(\btheta,\bar\btheta) \,. \nonumber
\intertext{Note that $\FIM_{obs}(\btheta,\bar\btheta,\bx) = \FIM_{obs}(\btheta,\bx)$  holds for any value of $\bar\btheta$. If we choose $\bar\btheta=\btheta$ then we have}
\FIM_{obs}(\btheta,\bar\btheta,\bx)\Big|_{\bar\btheta=\btheta} &  =
\underbrace{
\left[-\dfrac{\partial^2}{\partial \btheta\, \partial \btheta^\top} 
Q(\btheta,\bar\btheta)\right]_{\bar\btheta = \btheta}}_{ \let\scriptstyle\textstyle
\substack{\FIM_c(\btheta,\bx)}}
-
\underbrace{
\left[-\dfrac{\partial^2}{\partial \btheta\, \partial \btheta^\top} 
H(\btheta,\bar\btheta)\right]_{\bar\btheta = \btheta}}_{ \let\scriptstyle\textstyle
\substack{\FIM_m(\btheta,\bx)}} \nonumber \\
& = \FIM_{obs}(\btheta,\bx) \label{eq:missinginfoprinc}
\end{align}
where 
$
\FIM_c(\btheta,\bx)
$
is the complete Fisher information matrix, and 
$
\FIM_m(\btheta,\bx)
$
is the missing Fisher information matrix. Equation \eqref{eq:missinginfoprinc} has the interpretation that the information observed $\FIM(\btheta,\bx)$ is the complete information $\FIM_c(\btheta,\bx)$ minus the information missing from the latent process $\FIM_m(\btheta,\bx)$. This is commonly known as the missing information principle \citep{orchardandwoodbury72}. 
In our PCP example, the missing information principle states that the process of going from the unobserved latent tensor $\ten{Z}$ to the observed tensor $\ten{X}$ leads to a loss of quantifiable information as a result of the latent mechanism $\ten{X} = \ten{Z}\bar\times_1\bone$ (see the notation section \ref{sec:notation}).

Although equation \eqref{eq:missinginfoprinc} states the observed Fisher information $\FIM(\btheta,\bx)$ in terms of the complete Fisher information $\FIM_c(\btheta,\bx)$ which is usually easier to find, it also involves the missing Fisher information $\FIM_m(\btheta,\bx)$ that can be as complex as $\FIM(\btheta,\bx)$. Many techniques have been proposed to express $\FIM_m(\btheta,\bx)$ analytically or numerically in terms of the complete loglikelihood only \citep{mclachlanandthri07,louis82}. Here we will use the method of \citet{oakes99} because, unlike the popular method of \citet{louis82}'s, it allows us to obtain the observed Fisher information at any parameter value. We summarize \citet{oakes99}'s result in the following theorem.
\begin{theorem}[\citep{oakes99}]\label{thm:oakesHess}
The missing Fisher information $\FIM_m(\btheta,\bx)$ of Equation \eqref{eq:missinginfoprinc} can be written in terms of derivatives of the expected complete loglikelihood: 
$$
\FIM_m(\btheta,\bx) = \left[
\dfrac{\partial^2}{\partial \btheta\, \partial \bar\btheta^\top} Q(\btheta,\bar\btheta)
\right]_{\bar\btheta = \btheta}
$$
when the interchanging of integration with respect to $\bz$ and differentiation in $\btheta$ holds for $\log p(\bz|\bx)$.
\end{theorem}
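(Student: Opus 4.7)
The plan is to differentiate the decomposition $\ell(\bx|\btheta) = Q(\btheta,\bar\btheta) - H(\btheta,\bar\btheta)$ from Equation~\eqref{eq:llik_completemiss2} twice in $\btheta$, exploiting that this identity holds for every $\bar\btheta$ in the parameter space. First I would differentiate once in $\btheta$ while treating $\bar\btheta$ as a fixed independent variable, obtaining
\[
\frac{\partial \ell(\bx|\btheta)}{\partial\btheta} \;=\; \frac{\partial Q(\btheta,\bar\btheta)}{\partial\btheta} - \frac{\partial H(\btheta,\bar\btheta)}{\partial\btheta}.
\]
The crucial preliminary observation is that the second term on the right vanishes once I set $\bar\btheta = \btheta$. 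Writing $H(\btheta,\bar\btheta) = \int \log p(\bz|\bx,\btheta)\, p(\bz|\bx,\bar\btheta)\, \text{d}\bz$ and invoking the theorem's hypothesis that $\partial/\partial\btheta$ can be interchanged with the integral over $\bz$ for $\log p(\bz|\bx,\btheta)$, this collapses to the standard score-zero identity $\mbbE_{\bz|\bx,\btheta}[\partial \log p(\bz|\bx,\btheta)/\partial\btheta]=\bzero$ for the conditional model $p(\bz|\bx,\btheta)$, obtained by differentiating $\int p(\bz|\bx,\btheta)\, \text{d}\bz = 1$ and pulling the derivative through the integral.

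Having reduced the first-order identity to $\partial \ell(\bx|\btheta)/\partial\btheta = [\partial Q(\btheta,\bar\btheta)/\partial\btheta]_{\bar\btheta=\btheta}$, the main step is to differentiate this expression once more in $\btheta$. Because the map $\btheta \mapsto [\partial Q(\btheta,\bar\btheta)/\partial\btheta]_{\bar\btheta=\btheta}$ is the composition of $\btheta \mapsto (\btheta,\btheta)$ with $(\btheta,\bar\btheta) \mapsto \partial Q/\partial\btheta$, the chain rule produces two terms---one from differentiating through each slot of $Q$---giving
\[
\frac{\partial^2 \ell(\bx|\btheta)}{\partial\btheta\,\partial\btheta^\top} \;=\; \left[\frac{\partial^2 Q(\btheta,\bar\btheta)}{\partial\btheta\,\partial\btheta^\top}\right]_{\bar\btheta=\btheta} + \left[\frac{\partial^2 Q(\btheta,\bar\btheta)}{\partial\btheta\,\partial\bar\btheta^\top}\right]_{\bar\btheta=\btheta}.
\]
Negating both sides, identifying the left-hand side as $-\FIM_{obs}(\btheta,\bx)$ and the first bracket on the right as $-\FIM_c(\btheta,\bx)$ via Equation~\eqref{eq:missinginfoprinc}, and then solving the relation $\FIM_{obs} = \FIM_c - \FIM_m$ for $\FIM_m$ delivers the claimed formula $\FIM_m(\btheta,\bx) = [\partial^2 Q(\btheta,\bar\btheta)/(\partial\btheta\,\partial\bar\btheta^\top)]_{\bar\btheta=\btheta}$.

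The principal obstacle is keeping the chain-rule bookkeeping transparent: the compact notation $[\,\cdot\,]_{\bar\btheta=\btheta}$ masks the fact that $\btheta$ enters the outer expression through \emph{two} arguments of $Q$, and the cross-partial in $\bar\btheta$ that emerges from the second slot is exactly the content of the theorem---omitting it would mistakenly yield $\FIM_{obs} = \FIM_c$ and misplace all of the ``missing'' information. The only other technical point is justifying the interchange of differentiation and integration required for the score-zero identity, which is precisely the regularity hypothesis supplied in the theorem statement, so no additional machinery is needed beyond invoking it.
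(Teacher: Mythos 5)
Your argument is correct and is essentially the standard derivation of Oakes' identity: differentiate the decomposition $\ell = Q - H$ once in $\btheta$, kill the $H$-term at $\bar\btheta=\btheta$ via the conditional score-zero identity (which is exactly where the stated interchange hypothesis is used), differentiate the resulting composite map through both slots of $Q$, and match against $\FIM_{obs}=\FIM_c-\FIM_m$. The paper itself offers no proof---it states the result as a citation to Oakes (1999)---so there is nothing to contrast with; your chain-rule bookkeeping and signs all check out, and the cross-partial $\bigl[\partial^2 Q/(\partial\btheta\,\partial\bar\btheta^\top)\bigr]_{\bar\btheta=\btheta}$ is correctly identified as the missing information.
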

This is the key result that allows us to derive the Fisher information for the rank greater than one case.
We note that the interaction of integration with respect to $\bz$ and differentiation in Theorem \ref{thm:oakesHess} is satisfied by the Leibniz integral rule whenever the sample space of $\bz|\bx$ is not a function of $\btheta$, as is the case in our formulation.  

\subsubsection{Fisher information matrix}
Let
$\ten{M} = [\![\mat{A}_1,\dots,\mat{A}_N]\!]$ and recall the notation of Section \ref{sec:notation}. We now use the missing information principle and Oakes' theorem to calculate in close form the Fisher information matrix for the PCP model.
\begin{theorem}\label{thm:OFIM_genR}
The observed $\FIM_{obs}(\btheta,\bx)\in\mbbR^{(R\sum_qN_q)\times (R\sum_qN_q)}$ and expected $\FIM(\btheta)\in\mbbR^{(R\sum_qN_q)\times (R\sum_qN_q)}$ Fisher information matrices 
of the PCP model are $P\times P$ block matrices, where each $(k,l)$ block (of size $RN_k\times RN_l$, $k,l\in [P]$) is itself an $R\times R$ block matrix with $(r,s)$ sub-block of size $N_k\times N_l$, $r,s\in [R]$. That is,
$$
\FIM_{obs}(\btheta,\ten{X})
= 
\left\{
\left\{
\mat{D}_{k,l}^{r,s}\big(\ten{X}\oslash \ten{M}^{*2}\big) + \mat{F}_{k,l}^{r,s}
\right\}_{r,s}
\right\}_{k,l}
\quad \text{and} \quad 
\FIM(\btheta) 
= 
\left\{
\left\{
\mat{D}_{k,l}^{r,s}(\ten{M}^{*-1})
\right\}_{r,s}
\right\}_{k,l}.
$$
The matrices 
$\mat{D}_{k,l}^{r,s}(\ten{Y}),\mat{F}_{k,l}^{r,s}\in \mathbb{R}^{N_k\times N_l}$ are (here $\ba_{k,r}$ denotes the $r$-th column of $\mat{A}_k$):
$$
\mat{D}_{k,l}^{r,s}(\ten{Y}) = 
\begin{cases}
\diag\Big(
\ten{Y}\bar\times_{q\neq k} (\ba_{q,r}*\ba_{q,s})
\Big)& k = l\\
\left(\ba_{k,s}\ba_{l,r}^\top \right)*
\mat{Y}_{(k)} & k \neq l \text{ and } P=2\\
\left(\ba_{k,s}\ba_{l,r}^\top \right)*
\big(
\ten{Y}\bar\times_{q\neq k,l} (\ba_{q,r}*\ba_{q,s})
\big) & k < l \text{ and } P>2
\end{cases},
$$
$$
\mat{F}_{k,l}^{r,s} = 
\begin{cases}
0 &k = l \text{ or } r\neq s\\
\bone_{N_k}\bone_{N_l}^\top - 
(\mat{X}_{(k)}\oslash\mat{M}_{(k)}) 
& k \neq l \text{ and } r = s \text{ and } P=2 \\
\blambda_{-k,l}[r]\bone_{N_k}\bone_{N_l}^\top - 
\big(
(\ten{X}\oslash\ten{M})\bar\times_{q\neq k,l} (\ba_{q,r})
\big)
& k < l \text{ and } r = s \text{ and } P>2 
\end{cases}.
$$
When $P>2$ and $k>l$ we have $\mat{D}_{k,l}^{r,s}(\ten{Y}) = {\mat{D}_{l,k}^{r,s}(\ten{Y})}^\top$  and $\mat{F}_{k,l}^{r,s} = {\mat{F}_{l,k}^{r,s}}^\top$.
\end{theorem}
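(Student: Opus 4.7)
The plan is to compute $\FIM_{obs}(\btheta,\bx)$ via the missing information principle $\FIM_{obs} = \FIM_c - \FIM_m$ from Equation~\eqref{eq:missinginfoprinc}, using Oakes' identity (Theorem~\ref{thm:oakesHess}) to express $\FIM_m$ as a mixed second derivative of $Q(\btheta,\bar\btheta)$. The expected Fisher information then follows by taking $\mbbE_\bx$, exploiting $\mbbE[x_\bi] = m_\bi$. Throughout, I will work entrywise on $A_k(j,r)$ and only at the end repackage the indices $(k,r,j)$ into the block structure stated in the theorem.

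First, I would compute $\FIM_c(\btheta,\bx) = -[\partial^2 Q/\partial\btheta\partial\btheta^\top]_{\bar\btheta=\btheta}$ by differentiating the expression for $Q$ in Lemma~\ref{thm:Qfunc} twice in $\btheta$ while holding $\bar z_{r,\bi}$ fixed. Because the first term of $Q$ decomposes as $\sum_p \log A_p(i_p,r)$, same-column and same-mode diagonal terms in $\bar z/A_k(j,r)$ are the only contributors from this piece, yielding a diagonal block supported on $r=s$ and $j=j'$. From the multilinear term $\prod_p A_p(i_p,r)$, only cross-mode derivatives survive, producing a $\delta_{rs}$ factor together with a partial contraction $\sum_{\bi:i_k=j,i_l=j'}\prod_{p\neq k,l}A_p(i_p,r)$. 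Next, I would apply Oakes' theorem to get $\FIM_m = [\partial^2 Q/\partial\btheta\,\partial\bar\btheta^\top]_{\bar\btheta=\btheta}$. Since only the $\bar z_{r,\bi}$ coefficients in $Q$ depend on $\bar\btheta$, the core calculation is the derivative of $\bar z_{r,\bi} = \bar m_{r,\bi} x_\bi / \bar m_\bi$ in $\bar A_l(j',s)$; a quotient-rule calculation produces two distinct forms depending on whether $r=s$ (with a $1-m_{r,\bi}/m_\bi$ factor) or $r\neq s$ (a purely negative cross term).

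Then I would form $\FIM_{obs}=\FIM_c-\FIM_m$ and watch the cancellations. For same-mode blocks ($k=l$), the $r=s$ diagonal contribution from $\FIM_c$ cancels the ``$1$'' part of $\FIM_m$, leaving only a $\sum_{\bi:i_k=j}(x_\bi/m_\bi^2)\prod_{p\neq k}A_p(i_p,r)A_p(i_p,s)$ diagonal, which I would recognize as $\mat{D}_{k,k}^{r,s}(\ten{X}\oslash\ten{M}^{*2})$; when $r\neq s$, $\FIM_c$ vanishes and $-\FIM_m$ gives the same form directly. For cross-mode blocks ($k\neq l$) with $r=s$, the contraction from $\FIM_c$ combines with the ``linear'' piece of $\FIM_m$ to yield $\mat{F}_{k,l}^{r,r}$, while the remaining ``quadratic'' piece of $\FIM_m$ collapses (using $m_{r,\bi}/(A_k(j,r)A_l(j',r)) = \prod_{p\neq k,l}A_p(i_p,r)$) into $(\ba_{k,r}\ba_{l,r}^\top)*[(\ten{X}\oslash\ten{M}^{*2})\bar\times_{q\neq k,l}(\ba_{q,r}^{*2})]$; the $r\neq s$, $k\neq l$ case reduces to only a $\mat{D}_{k,l}^{r,s}$ contribution after analogously simplifying $m_{r,\bi}m_{s,\bi}/(A_k(j,r)A_l(j',s))$. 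Finally, for $\FIM(\btheta)=\mbbE[\FIM_{obs}]$ I would use $\mbbE[x_\bi/m_\bi^2]=1/m_\bi$ so that $\mat{D}_{k,l}^{r,s}(\ten{X}\oslash\ten{M}^{*2})$ becomes $\mat{D}_{k,l}^{r,s}(\ten{M}^{*-1})$, and check that $\mbbE[\mat{F}_{k,l}^{r,r}]=0$ via the identity $\mbbE[(\ten{X}\oslash\ten{M})\bar\times_{q\neq k,l}(\ba_{q,r})]=\blambda_{-k,l}[r]\bone\bone^\top$, which is exactly the leading constant term of $\mat{F}$.

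The main obstacle will be bookkeeping: carefully tracking which indices survive in each second-derivative case (particularly the separation between $P=2$ and $P>2$, where the ``inner contraction'' degenerates because the set $\{q : q\neq k,l\}$ is empty) and packaging the entrywise scalar sums back into the tensor-contraction notation used in $\mat{D}$ and $\mat{F}$. A secondary subtlety is verifying the symmetry claim $\mat{D}_{k,l}^{r,s}=(\mat{D}_{l,k}^{r,s})^\top$ and $\mat{F}_{k,l}^{r,s}=(\mat{F}_{l,k}^{r,s})^\top$, which I would obtain for free from the symmetry of mixed partials of $Q$ together with the observed symmetry $\mbbE[\FIM_{obs}(\btheta,\bx)] = \FIM(\btheta)$ being positive semi-definite.
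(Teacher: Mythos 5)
Your proposal is correct and follows essentially the same route as the paper's Appendix~\ref{proof:Hessian}: apply Oakes' theorem (Theorem~\ref{thm:oakesHess}) together with the missing information principle to write $\FIM_{obs}$ as $-[\partial^2 Q/\partial\btheta\,\partial\btheta^\top + \partial^2 Q/\partial\btheta\,\partial\bar\btheta^\top]_{\bar\btheta=\btheta}$, observe the cancellation of the $\bar{\mat{Z}}_k\oslash\mat{A}_k^{*2}$ diagonal terms, identify the leftover cross-mode pieces as $\mat{F}_{k,l}$, and pass to the expectation via $\mbbE(\ten{X})=\ten{M}$. The only difference is bookkeeping style --- you differentiate entrywise in $\mat{A}_k(j,r)$ whereas the paper's Lemmas~\ref{lemma:gradQ1} and~\ref{lemma:Zders} carry out the same computation in vectorized matrix-calculus form with Kronecker and commutation-matrix machinery --- which does not change the substance of the argument.
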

\begin{proof}
See Appendix \ref{proof:Hessian}. 
\end{proof}

The observed or expected Fisher information matrices can be computed efficiently based on the following simplifications. 
First, the matrix $((\odot\mat{A}_{[P]\setminus\{k\}})^\top \odot\mat{Y}_{(k)} )\odot\mat{A}_{[P]\setminus\{k\}}$ contains all the non-zero entries of $\mat{D}_{k,k}^{r,s}(\ten{Y})$, across all $r,s$. 
When $P>2$ and $k\neq l$, the matrix
$
\big((\odot\mat{A}_{[P]\setminus\{k, l\}})^\top (\odot\mat{A}_{[P]\setminus\{k, l\}})^\top)\mat{Y}_{(k,l)}^\top\big)
*(\mat{A}_k^\top \otimes \mat{A}_l^\top)
$
contains the entries of $\mat{D}_{k,l}^{r,s}(\ten{Y})$, across all $r,s$. Here  $\mat{Y}_{(k,l)}$ is a matricization of $\ten{Y}$ that brings the $(k,l)$ modes to the rows, and the remainder modes to the columns.

In Section \ref{sec:MCFIM} we will use Monte-Carlo simulations to corroborate the validity of our expression for $\FIM(\btheta)$ in Theorem \ref{thm:OFIM_genR}. In these experiments we will use Bartlett's moment identities \citep[sec. 2]{bartlett53}, which state that when the interchanging of integration with respect to $\bx$ and differentiation in $\btheta$ holds for the loglikelihood $\ell(\bx|\btheta)$, then 
\begin{equation}\label{eq:bartletts}
\mbbE\Big( \nabla_{\btheta} \ell(\bx|\btheta)\Big) =\bzero
\quad \text{and} \quad
\mbbE\Big( (\nabla_{\btheta} \ell(\bx|\btheta))(\nabla_{\btheta} \ell(\bx|\btheta))^\top\Big) = \FIM(\btheta),
\end{equation}
where $\nabla_{\btheta} \ell(\bx|\btheta)$ is the score function.  For the PCP model with $\ell(\bx|\btheta)$ in \eqref{eq:llik1},  the score function is
\begin{equation}\label{eq:score1}
\nabla_{\btheta} \ell(\bx|\btheta)  = \begin{bmatrix}
\big(\nabla_{\vecc(\mat{A}_1)} \ell(\bx|\btheta)\big)^\top & \cdots & \big(\nabla_{\vecc(\mat{A}_P)} \ell(\bx|\btheta)\big)^\top
\end{bmatrix}^\top,
\end{equation}
where 
$
\nabla_{\vecc(\mat{A}_p)} \ell(\bx|\btheta) = \vecc\Big((\mat{X}_{(p)} \oslash \mat{M}_{(p)} - 1)_{(p)}(\odot\mat{A}_{[P]\setminus\{p\}})\Big).
$
In Theorem \ref{thm:OFIM_genR} we chose to apply Oakes' theorem and not Bartlett's identities, because Oakes' theorem also enables us to obtain the observed Fisher information matrix $\FIM_{obs}(\btheta,\bx)$.

\subsubsection{Fisher information matrix rank}

The rank of the expected Fisher information matrix $\FIM(\btheta)$ reveals the dimensionality of the parameter space over which parameter inference is made. Hence, the rank provides insight into model complexity; a higher rank indicates a more complex model with greater flexibility in fitting the data, whereas a lower rank signifies a more parsimonious model. The following conjecture states the rank of the Fisher information matrix $\FIM(\btheta)$ of Theorem \ref{thm:OFIM_genR}. We prove it for the special case of $R=1$ in Theorem \ref{thm:Rank1_identf}, and provide numerical evidence for the case of rank greater than one in Section \ref{sec:sim_identif}.

\begin{conjecture}\label{conjecture:rank}
    Consider the Fisher information matrix $\FIM(\btheta)$ of Theorem \ref{thm:OFIM_genR} (of size $(R\sum_qN_q)\times(R\sum_qN_q)$). Let $\btheta$ be such that its components are strictly positive, and no two distinct subsets of the components are linearly dependent. Then
    $$
    \text{rank}(\FIM(\btheta)) = \min\Big(R\sum_qN_q-L, \prod_qN_q\Big),
    $$
where $L=(\min\{R,N_1,N_2\})^2$ if $P=2$, and $L=R(P-1)$ if $P>2$.
\end{conjecture}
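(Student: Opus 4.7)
The plan is to reduce the rank computation to an analysis of the Jacobian of the CP parametrization map. Differentiating the independent-Poisson log-likelihood directly yields the GLM-type factorization
\begin{equation*}
\FIM(\btheta) = \mat{J}^\top \dv(\ten{M}^{*-1}) \mat{J}, \qquad \mat{J} := \frac{\partial\, \vecc(\ten{M})}{\partial \btheta^\top} \in \mbbR^{\prod_q N_q \,\times\, R\sum_q N_q},
\end{equation*}
which one can verify directly recovers the block entries $\mat{D}_{k,l}^{r,s}(\ten{M}^{*-1})$ of Theorem~\ref{thm:OFIM_genR}. Since $\ten{M}$ has strictly positive entries by hypothesis, the weighting $\dv(\ten{M}^{*-1})$ is positive definite, so $\mathrm{rank}(\FIM(\btheta)) = \mathrm{rank}(\mat{J})$. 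The bound $\mathrm{rank}(\mat{J}) \leq \prod_q N_q$ is immediate from the number of rows of $\mat{J}$, and rank--nullity then reduces the problem to computing $\dim \ker \mat{J}$, i.e., the dimension of the tangent space at $\btheta$ to the fiber $\phi^{-1}(\ten{M})$ of the parametrization $\phi(\btheta) = [\![\mat{A}_1,\dots,\mat{A}_P]\!]$.

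The central claim I would establish is that, under the genericity hypothesis, $\ker \mat{J}$ equals the Lie algebra of the continuous stabilizer of $\btheta$ under the natural symmetry group of the CP parametrization. For $P \geq 3$ this group is the torus acting by column rescaling $\mat{A}_p \mapsto \mat{A}_p \diag(\vct{c}_p)$ with $\prod_p \vct{c}_p = \bone_R$, a Lie group of dimension $R(P-1)$. Under the assumption that no two distinct subsets of the rank-one components are linearly dependent, this action has trivial continuous stabilizer (the only residual ambiguity is the discrete permutation of components), and invoking generic uniqueness of CP decompositions in the Kruskal/Terracini-type sense would yield $\dim \ker \mat{J} = R(P-1)$ whenever the expected dimension $R\sum_q N_q - R(P-1)$ does not exceed $\prod_q N_q$; otherwise $\mathrm{rank}(\mat{J})$ saturates at the ambient dimension.

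For $P = 2$ the symmetry enlarges to $GL(R)$ acting by $(\mat{A}_1, \mat{A}_2) \mapsto (\mat{A}_1 \mat{Q}, \mat{A}_2 \mat{Q}^{-\top})$. When $R \leq \min(N_1, N_2)$, generic factor matrices have full column rank, the action is free, and $\dim \ker \mat{J} = R^2$, giving rank $R(N_1+N_2) - R^2$. When $R > \min(N_1, N_2)$, every $N_1 \times N_2$ matrix has matrix rank at most $\min(N_1, N_2) \leq R$, so $\phi$ is generically surjective onto $\mbbR_{+}^{N_1 \times N_2}$ and $\mathrm{rank}(\mat{J}) = N_1 N_2$ directly; the symmetry orbit enlarges correspondingly so that the discrepancy between $R^2$ and the true nullity is absorbed. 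Both subcases combine into the claimed expression with $L = (\min\{R, N_1, N_2\})^2$.

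\textbf{Main obstacle.} The technically demanding step is showing that $\ker \mat{J}$ contains no null directions beyond those generated by the symmetry orbit---equivalently, proving local identifiability of the CP parametrization under the conjecture's genericity condition. For $P \geq 3$ this is the heart of the matter: one must show that the subset-linear-independence hypothesis implies the hypothesis of a suitable generic $R$-identifiability theorem for tensors, and in particular handle ranks above the Kruskal bound, where defective cases are known to exist and generic identifiability must be invoked rather than guaranteed combinatorially. A secondary subtlety is verifying that, for $P = 2$ in the rank-deficient regime $R > \min(N_1, N_2)$, the enlarged stabilizer of the $GL(R)$-action has dimension exactly $R^2 - (R - N_1)(R - N_2)$ (assuming WLOG $N_1 \leq N_2 \leq R$), producing the correct saturation to match the min-formula.
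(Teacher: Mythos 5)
The statement you are trying to prove is left as a \emph{conjecture} in the paper: the authors prove it only for $R=1$ (Theorem~\ref{thm:Rank1_identf}, via an explicit block partition, a Sherman--Morrison inversion of the principal submatrix $\FIM_F(\btheta)$, a vanishing Schur complement, and Guttman rank additivity) and support the general case purely numerically in Section~\ref{sec:sim_identif}. Your opening reduction is correct and worth keeping: since the entries of $\ten{X}$ are independent Poissons, the chain rule gives $\FIM(\btheta)=\mat{J}^\top\dv(\ten{M}^{*-1})\mat{J}$ with $\mat{J}=\partial\vecc(\ten{M})/\partial\btheta^\top$, the middle factor is positive definite because $\ten{M}>0$, and hence $\mathrm{rank}(\FIM(\btheta))=\mathrm{rank}(\mat{J})$. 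This cleanly converts the problem into computing the dimension of the kernel of the Jacobian of the CP parametrization, and it is a genuinely different framing from the paper's direct block-matrix computation.

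However, your argument has a genuine gap, and it is exactly the one you flag yourself: the claim that $\ker\mat{J}$ coincides with the tangent space to the scaling-group orbit (dimension $R(P-1)$ for $P\geq 3$, or the appropriate $GL(R)$-orbit dimension for $P=2$) under the stated hypothesis is asserted, not proven. This is local $R$-identifiability of the CP parametrization, which is the entire content of the conjecture; the known results you would invoke (Kruskal-type or generic-identifiability theorems in the sense of Chiantini--Ottaviani--Vannieuwenhoven) hold on a Zariski-dense open set, and nothing in the proposal shows that the paper's hypothesis---``no two distinct subsets of the components are linearly dependent''---places $\btheta$ in that set. You also correctly note that above the Kruskal bound defective cases exist, so the expected-dimension count can fail, and for $P=2$ in the regime $R>\min\{N_1,N_2\}$ the stabilizer-dimension claim $R^2-(R-N_1)(R-N_2)$ is stated without verification. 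As written, the proposal is a plausible research program rather than a proof, which is consistent with the statement remaining open in the paper; to make progress you could at least try to extend the paper's $R=1$ Schur-complement technique, where the orbit directions can be exhibited explicitly as a null basis $\mathbf{H}$ and the complementary principal submatrix shown nonsingular.
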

The condition that no two distinct subsets of the components of $\btheta$ are linearly dependent ensures that the components do not introduce redundancy among themselves. From our conjecture we conclude that $\text{rank}(\FIM(\btheta))<R\sum_qN_q$, indicating that $\FIM(\btheta)$ is always singular. This singularity can be categorized into two cases. 
The first case occurs when $\prod_qN_q<R\sum_qN_q-L$, where the dimensionality of the parameter space exceeds that of the sample space; thus, either the rank $R$ should be reduced, or the dimensions $N_1,\dots,N_P$ should be increased. 
The second case arises when $\prod_qN_q>R\sum_qN_q-L$. Here, the null space of $\FIM(\btheta)$ has dimension $L$.
When $P>2$, $L = R(P-1)$ because there are $R$ distinct rank one components of $\ten{M}$ having $P-1$ redundancies each, as per the unidentifiability of CP decompositions discussed in Section \ref{sec:relatedwork}. 
When $P=2$, we have that $\mat{M} = \mat{A}_1\mat{A}_2^\top$ is a low-rank matrix. In this case, the dimension of the null space of $\FIM(\btheta)$ is $(\min\{R,N_1,N_2\})^2$ because of possible rotations in factor columns.

Having studied the $R>1$  case through a latent-variable formulation, we will now study in more detail the $R=$1 case. While the $R=$1 case can be framed as a special case of the results we found in this section, it also has important simplifications that make inference more straight forward.

\section{Rank One PCP Inference}\label{sec:rank1} 
Parameter inference for the rank one case simplifies considerably because the latent variable $\ten{Z}$ equals the observed random variable $\ten{X}$, the factor matrices are vectors that we denote by $\ba_1\dots,\ba_P$, the parameter vector is $\btheta = \begin{bmatrix}\ba_1^\top& \cdots & \ba_P^\top\end{bmatrix}^\top \in \mathbf{R}^{N_1+N_2+ \cdots + N_p}$, and the PCP model is written as
\begin{equation}\label{eq:pcpR1}
    x_{\bi}\sim\Poi\left(\prod_{p=1}^P \ba_{p}(i_p)\right),
\end{equation}
where $\ba_{p}(i_p)$ denotes entry $i_p$ in vector $\ba_{p}$.
Recall from the introduction that we will assume that every entry of $\btheta$ is positive; otherwise the Poisson distribution of Equation \eqref{eq:pcpR1} is undefined.

\subsection{Rank one latent variable formulation}\label{sec:r1_lvf}

Because the complete data vector $\bz$ is equal to the observed data vector $\bx$ in the rank one case, the conditional random variable $\bz|\bx$ is degenerate and therefore the missing loglikelihood $\ell_m(\bz|\bx,\btheta)$ is zero.
In this case, the loglikelihood decomposition \eqref{eq:llik_completemiss} holds with $\ell(\bx|\btheta)=\ell_c(\bz|\btheta)$ and simplifies to
\begin{subequations}
\begin{equation}\label{eq:rank1loglik}
\begin{aligned}
        \ell(\bx|\btheta) &= 
        \sum_{\bi}\left[x_{\bi}\log\left(\prod_{p=1}^P \ba_{p}(i_p)\right)-\left(\prod_{p=1}^P \ba_{p}(i_p)\right)\right]+C
        \\&=
        \sum_{p=1}^P\left[
        \bx_p^\top\log(\ba_p)
        \right]
- \lambda  +C,
\end{aligned}
\end{equation}
where $\bx_p :=\mat{X}_{(p)}\bone_{N/N_p} \in \mathbb{N}_{0}^{N_p} $, the constant $C$ does not depend upon $\btheta$, and 
\begin{align}
\lambda \coloneqq \prod_{p=1}^P \lambda_p \text{ and } \lambda_p \coloneqq \bone_{N_p}^\top \ba_p.
\end{align}
\end{subequations}
The conditional expectation decomposition~\eqref{eq:llik_completemiss2} simplifies to $\ell(\bx,\btheta) = Q(\btheta,\bar\btheta)$ because $H(\btheta,\bar\btheta) = 0$. 
Furthermore, $\mat{\bar Z}_p$ matrix of Equation \eqref{eq:Zmat} simplifies (for any $\bar \btheta$) to
\begin{equation*}
    \begin{aligned}
        \mat{\bar Z}_p &=
 \bar \ba_p * \left([\ten{X}_{(p)}\oslash ( \bar \ba_p (\odot {\bar \ba}_{[P]\setminus\{p\}})^\top)] \odot {\bar \ba}_{[P]\setminus\{p\}}\right)
 =  
\mat{X}_{(p)} \bone_{N/N_p} = \bx_p.
    \end{aligned} 
\end{equation*}
We remark that Theorem~\ref{thm:Qfunc} implies that apart from a scaling $\mat{\bar Z}_p$ maximizes $\ell(\bx,\btheta) = Q(\btheta,\bar\btheta)$. 
Moreover, as we will show in Lemma~\ref{lem:solve-mle}, the rank one estimator for the model $\ten{M}$ is unique.
This means that the row-sum vector $\bx_p$ is the special case of the matrix $\mat{\bar Z}_p$ in Equation \eqref{eq:Zmat}. Hence, according to Corollary~\ref{cor:Qfunc}, if $\lambda_p=\lambda$ then the loglikelihood of $\bx_p$ is that of $\bx_p\sim \Poi(\ba_p)$ up to a constant. This idea is critical for framing the rank one case as an instance of the case of rank greater than one that we studied in Section \ref{sec:genrank}. 
In the general case, inference on each factor $\mat{A}_p$ was done using $Q(\btheta,\bar\btheta)$ through $\mat{\bar Z}_p$. In the rank one case, inference for each factor $\ba_p$ will be done using $\ell(\bx|\btheta)$ through $\bx_p$. 
Inference based on $\bx_p$ is convenient because the objective function $\ell(\bx|\btheta)$ is concave on $\ba_p$, and it depends only on $\bx_p$.

We denote the gradient and Hessian of the loglikelihood by $\nabla_{\btheta} \ell(\bx|\btheta)$ and $\nabla^2_{\btheta}\ell(\bx|\btheta)$, where $
\btheta =\begin{bmatrix}\ba_1^\top& \cdots & \ba_P^\top\end{bmatrix}^\top$. We have
\begin{align*}
\nabla_{\btheta} \ell(\bx|\btheta) = \begin{bmatrix}
\big(\nabla_{\ba_1} \ell(\bx|\btheta)\big)^\top & \cdots & \big(\nabla_{\ba_P} \ell(\bx|\btheta)\big)^\top
\end{bmatrix}^\top \in \mathbb{R}^{N_1+N_2+\cdots+N_P}
\end{align*}
and so it suffices to consider
\begin{subequations} \label{deriv-loglike}
\begin{align}\label{eq:gradientR1}
 \nabla_{\ba_p} \ell(\bx|\btheta)& = \bx_p \oslash \ba_p
        - \Big(\prod_{q\neq p } \lambda_q\Big) \bone_{N_p} \in \mathbb{R}^{N_p}.
\end{align}
Similarly, $\nabla^2_{\btheta}\ell(\bx|\btheta)$ is a block matrix with $(p,q)$-block
\begin{equation}\label{eq:hessian}
    \nabla^2_{\ba_p,\ba_q}\ell(\bx|\btheta)
    =\begin{cases}
        -\diag(\bx_p\oslash\ba_p^{*2}) \in \mathbb{R}^{N_p \times N_p}&\text{if } p=q \\
        -\lambda\lambda_{pq}^{-1}\bone_{N_p}\bone_{N_q}^\top \in \mathbb{R}^{N_p \times N_q} &\text{otherwise} 
    \end{cases}
\end{equation}
\end{subequations}
where $\lambda_{pq} = \lambda_p\lambda_q$ and $\ba_p^{*2}\coloneqq \ba_p*\ba_p$ denotes a Hadamard power. 

\subsection{Rank one maximum likelihood estimation}\label{sec:MLErank1}

Our interest is in estimating a rank one $P$-way tensor 
\begin{align}
\ten{M} =  \ba_1 \circ \cdots \circ \ba_P \in \mathbb{R}_{+}^{N_1 \times N_2 \times \cdots \times N_p}
\end{align}
for $\ten{X}$, which maximizes the loglikelihood in Equation \eqref{eq:rank1loglik}.
A key quantity is the sum of all the elements of the tensor $\ten{X}$, which is given by $\bone_{N}^\top \vecc(\ten{X})$. This sum is positive for a nonzero tensor.
The following lemma explains that a constraint involving $\bone_{N}^\top \vecc(\ten{X})$ and $\btheta$, that is, the factor vectors $\ba_1\dots,\ba_P$, produces a well-posed optimization problem with a closed-form solution. This is somewhat remarkable because the critical point is given by the solution of a constrained nonlinear set of equations. 
\begin{lemma} \label{lem:solve-mle}
Let $\eqref{eq:rank1loglik}$ denote the loglikelihood for a rank one PCP model with gradient \eqref{eq:gradientR1}. 
If $\bx_p=\mat{X}_{(p)}  \bone_{N/N_p}$ is a positive vector for $ p \in [P]$ then 
\begin{subequations}
\begin{align}
&\max_{\btheta \in \mathbb{R}^{N_1 + N_2 + \cdots + N_p,\,+} } \ell(\btheta) \\
\intertext{ subject to the multi-linear constraint} 
&\lambda:=\lambda_1 \cdots \lambda_P =  \bone_{N}^\top \vecc(\ten{X}) > 0 \label{lem:constraint}
\end{align}
\end{subequations}
has the solution
\begin{align}\label{r1mM}
\hmM = \hat\ba_1 \circ \cdots \circ \hat\ba_P & =
\frac{1}{\lambda^{P-1}} \, \bx_1 \circ \cdots \circ \bx_P\;.
\end{align}
\end{lemma}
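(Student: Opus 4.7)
The approach is to exploit the CP scaling indeterminacy to decouple the constrained problem into $P$ independent strictly concave subproblems. First observe that, because $\bx_p = \mat{X}_{(p)}\bone_{N/N_p}$, we have $\bone_{N_p}^\top \bx_p = \bone_N^\top \vecc(\ten{X})$ for every $p \in [P]$; denote this common sum by $s$. Positivity of the $\bx_p$'s yields $s > 0$. Under the constraint $\lambda = s$, the loglikelihood \eqref{eq:rank1loglik} simplifies to $\sum_{p=1}^P \bx_p^\top \log(\ba_p) - s + C$, since $-\lambda = -s$ is now constant on the feasible set.

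Next, I would introduce auxiliary normalizations to separate the sum across $p$. Choose any positive $\lambda_1,\dots,\lambda_{P-1}$ and set $\lambda_P = s/\prod_{p<P}\lambda_p$, so that $\prod_p \lambda_p = s$ holds automatically. Imposing the additional constraints $\bone_{N_p}^\top \ba_p = \lambda_p$ for each $p$ does \emph{not} shrink the attainable set of rank-one tensors $\ten{M} = \ba_1 \circ \cdots \circ \ba_P$, because any rescaling $\ba_p \leftarrow c_p \ba_p$ with $\prod_p c_p = 1$ leaves $\ten{M}$ unchanged and can be used to match any prescribed $\lambda_p$'s whose product equals $\lambda$. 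With these normalizations, the objective decouples into $P$ independent subproblems of the form $\max \bx_p^\top \log(\ba_p)$ subject to $\bone^\top \ba_p = \lambda_p$ and $\ba_p > 0$.

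Each subproblem is strictly concave. Its Lagrangian first-order condition $\bx_p \oslash \ba_p = \mu_p \bone_{N_p}$ yields $\hat\ba_p = \bx_p/\mu_p$, and the normalization forces $\mu_p = s/\lambda_p$, hence $\hat\ba_p = (\lambda_p/s)\,\bx_p$. Forming the outer product,
\[
\hmM \;=\; \hat\ba_1 \circ \cdots \circ \hat\ba_P \;=\; \frac{\prod_p \lambda_p}{s^{P}}\, \bx_1 \circ \cdots \circ \bx_P \;=\; \frac{1}{s^{P-1}}\, \bx_1 \circ \cdots \circ \bx_P,
\]
which is independent of the arbitrary normalizations $\lambda_1,\dots,\lambda_{P-1}$ and matches \eqref{r1mM} upon recalling that $\lambda = s$ by the constraint \eqref{lem:constraint}.

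The main obstacle is not the calculation itself but the bookkeeping around non-uniqueness: the loglikelihood is not jointly concave in $\btheta$, and the single constraint $\lambda = s$ leaves a $(P-1)$-dimensional scaling orbit of factor vectors unresolved. The argument above handles this cleanly by showing that (i) the auxiliary normalizations I impose are a valid \emph{gauge fixing} of the CP scaling freedom rather than a restriction, (ii) within each gauge the objective decouples into strictly concave univariate-block problems with unique maximizers, and (iii) the resulting product tensor $\hmM$ is invariant under the choice of gauge. This yields a unique rank-one optimizer at the tensor level, even though $\btheta$ itself is only determined up to the CP scaling ambiguity consistent with $\lambda = s$.
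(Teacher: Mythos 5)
Your proof is correct, but it is organized differently from the paper's. The paper works directly with the coupled stationarity system: it sets the gradient \eqref{eq:gradientR1} to zero, obtaining $\hat\ba_p = \bigl(\prod_{q\neq p}\lambda_q\bigr)^{-1}\bx_p$ for all $p$ simultaneously, observes that contracting each such equation with $\bone_{N_p}$ forces the constraint \eqref{lem:constraint} to hold at any critical point, and then forms the outer product to cancel the scaling ambiguity. You instead fix the gauge first: you note that $\ell$ depends on $\btheta$ only through $\ten{M}$ and that prescribing $\bone_{N_p}^\top\ba_p=\lambda_p$ with $\prod_p\lambda_p=\lambda$ does not shrink the set of attainable rank-one tensors, after which the objective decouples into $P$ independent strictly concave subproblems, each solved in closed form by a Lagrange multiplier. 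What your route buys is an explicit verification that the critical point is in fact a (unique, gauge-invariant) maximizer --- the paper asserts the critical point without checking second-order conditions, and remarks only that the closed form is ``somewhat remarkable'' for a constrained nonlinear system. What the paper's route buys is brevity and the observation that the constraint \eqref{lem:constraint} is not an external imposition but a consequence of stationarity, which your version takes as a hypothesis rather than derives. One small point worth making explicit in your write-up: the decoupling step implicitly uses $\bone_{N_p}^\top\bx_p=\bone_N^\top\vecc(\ten{X})$ for every $p$ (which you do state), and the strict concavity of each subproblem relies on the hypothesis that $\bx_p$ is strictly positive, exactly as the lemma assumes.
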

\begin{proof}
\begin{subequations}
The gradient \eqref{eq:gradientR1} implies that
the critical point is given by positive vector
\begin{align} \label{lem:1}
\hat\ba_p & = \frac{1}{\prod_{q\neq p }^P \lambda_q } \mat{X}_{(p)}  \bone_{N/N_p} \in \mathbb{R}_{+}^{N_p} \text{ for } p=1,\ldots, P\,.
\end{align}
If we take the dot product of both sides with $\bone_{N_p}$ then the constraint \eqref{lem:constraint} follows.
Because $\lambda_1 > 0, \ldots, \lambda_P > 0$ if and only if $ \bone_{N_p}^\top \mat{X}_{(p)}  \bone_{N/N_p}> 0$ we can rewrite \eqref{lem:1} as
\begin{align} \label{lem:2}
\hat\ba_p & =  \frac{\lambda_p}{\lambda_1 \cdots \lambda_P} \mat{X}_{(p)}  \bone_{N/N_p} \in \mathbb{R}_{+}^{N_p}\,.
\end{align}
\end{subequations} 
The rank one tensor $\hmM$ in \eqref{r1mM} now follows by forming the order $P$ rank one tensor from \eqref{lem:2} for $p=1,\ldots,P$ and each element of $\hmM$ is positive because each factor vector is positive.
\end{proof}

An important consequence of Lemma \ref{lem:solve-mle} is that the rank one tensor $\hmM$ is unique and can be expressed in terms of the nonzero data tensor $\ten{X}$ while the factor vectors are unique up to the scaling in \eqref{lem:1}. 
The constraint \eqref{lem:constraint} explains that the product of the $P$ scalars $\lambda_p$ must be equal to the sum of the elements of $\ten{X}$. 
For example, two choices are to set $\lambda_p^{1/P}= \bone_{N}^\top \vecc(\ten{X})$ for all $p$, or to set one $\lambda_p= \bone_{N}^\top \vecc(\ten{X})$ and all the other $\lambda_q$  to a simplex $(q\neq p)$.

In the following lemma we show that while the factor vectors $\ba_1,\dots,\ba_P$ are non-unique, choosing a specific constraint set can help us estimate them uniquely, and find properties such as bias.

\begin{lemma}\label{lemma:rank1bias}
Consider the constraint set $\tilde\Theta$ that assigns all the weight to the first factor vector $\ba_1$ by constraining the remaining factor vectors to a simplex, i.e,
\begin{equation}\label{eq:Theta}
\tilde\Theta = \Big\{\btheta = \begin{bmatrix} \ba_1^\top & \cdots& \ba_P^\top\end{bmatrix}^\top: \bone_{N_2}^\top\ba_2 \!=\! \dots \!=\! \bone_{N_P}^\top\ba_P \!=\! 1\Big\}.
\end{equation}
Any $\btheta=\begin{bmatrix} \ba_1^\top & \cdots& \ba_P^\top\end{bmatrix}^\top$ can be parameterized to belong to $\tilde\Theta$, since
\begin{equation}\label{eq:identifR1}
    \tilde\btheta = \begin{bmatrix} \tilde\ba_1^\top & \cdots& \tilde\ba_P^\top\end{bmatrix}^\top:= 
    \begin{bmatrix}
    \ba_1^\top\lambda\lambda_1^{-1} & \ba_2^\top\lambda_2^{-1} &\dots&\ba_P^\top\lambda_P^{-1}        
    \end{bmatrix}^\top\in\tilde\Theta.
\end{equation}
Let $\lambda = \bone_N^\top\vecc(\ten{X})$. Then the unique MLE over $\tilde\Theta$ is 
\begin{equation*}\label{eq:mle_r1}
    \hat\btheta :=\begin{bmatrix}
    \hat\ba_1\\ \hat\ba_2\\ \vdots\\ \hat \ba_P
\end{bmatrix} = \begin{bmatrix}
    \mat{X}_{(1)}\bone_{N/N_{1}}\\\lambda^{-1}\mat{X}_{(2)}\bone_{N/N_{2}}\\
    \vdots\\    \lambda^{-1}\mat{X}_{(P)}\bone_{N/N_{P}}
\end{bmatrix}.
\end{equation*}
Furthermore, $\hat\btheta$ is an unbiased estimator of $\tilde\btheta$ of Equation \eqref{eq:identifR1}. That is,
$\mbbE(\hat\btheta) = \tilde\btheta$.
\end{lemma}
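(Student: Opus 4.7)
The plan is to address the three assertions of the lemma in order: (i) that the map $\btheta\mapsto\tilde\btheta$ in \eqref{eq:identifR1} is a valid reparameterization into $\tilde\Theta$; (ii) that the closed-form $\hat\btheta$ is the unique MLE over $\tilde\Theta$; and (iii) that $\hat\btheta$ is unbiased for $\tilde\btheta$. The first two steps are essentially algebraic corollaries of Lemma \ref{lem:solve-mle}; the third requires one probabilistic input, namely the Poisson-to-multinomial conditioning already used inside the proof of Lemma \ref{thm:Qfunc}.

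For (i), I would check directly that $\bone_{N_p}^\top\tilde\ba_p = \lambda_p^{-1}\bone_{N_p}^\top\ba_p = 1$ for every $p\geq 2$, so $\tilde\btheta\in\tilde\Theta$, and then verify that the underlying rank-one tensor is preserved: the outer product $\tilde\ba_1\circ\cdots\circ\tilde\ba_P$ equals $\lambda\lambda_1^{-1}\lambda_2^{-1}\cdots\lambda_P^{-1} \cdot \ba_1\circ\cdots\circ\ba_P$, and the scalar prefactor collapses to $1$ by the definition $\lambda = \prod_p\lambda_p$.

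For (ii), I would specialize Lemma \ref{lem:solve-mle} to the constraint set $\tilde\Theta$. The constraints $\hat\lambda_p = 1$ for $p\geq 2$ together with the multi-linear constraint \eqref{lem:constraint} force $\hat\lambda_1 = \bone_N^\top\vecc(\ten{X}) =: \lambda$, and substituting into \eqref{lem:2} gives $\hat\ba_1 = \mat{X}_{(1)}\bone_{N/N_1}$ and $\hat\ba_p = \lambda^{-1}\mat{X}_{(p)}\bone_{N/N_p}$ for $p\geq 2$, matching the stated formula. Uniqueness over $\tilde\Theta$ is inherited from the uniqueness of $\hmM$ in Lemma \ref{lem:solve-mle}, since imposing the simplex constraints on $\ba_2,\dots,\ba_P$ fixes the remaining scaling freedom.

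For (iii), the case $p=1$ is immediate from linearity of expectation: $\mbbE(\hat\ba_1) = \mbbE(\bx_1) = \mat{M}_{(1)}\bone_{N/N_1} = \bigl(\prod_{q\neq 1}\lambda_q\bigr)\ba_1 = \tilde\ba_1$. The case $p\geq 2$ is the main obstacle, because $\hat\ba_p = \bx_p/\lambda$ is a ratio of two random quantities and in general $\mbbE(\bx_p/\lambda)\neq \mbbE(\bx_p)/\mbbE(\lambda)$. To handle this I would invoke Poisson aggregation: $\lambda = \sum_{\bi} x_{\bi} \sim \Poi(\lambda_1\cdots\lambda_P)$, and conditional on $\lambda$ the counts $\{x_{\bi}\}$ are jointly multinomial with cell probabilities $m_{\bi}/(\lambda_1\cdots\lambda_P)$ --- exactly the relationship exploited in the proof of Lemma \ref{thm:Qfunc}. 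Aggregating these cells over every mode except $p$ shows $\bx_p\mid\lambda \sim \text{Multinomial}(\lambda,\ba_p/\lambda_p)$, so that $\mbbE(\bx_p\mid\lambda) = \lambda\,\ba_p/\lambda_p$. The tower property then yields $\mbbE(\hat\ba_p) = \mbbE\bigl[\mbbE(\bx_p/\lambda\mid\lambda)\bigr] = \ba_p/\lambda_p = \tilde\ba_p$, where the ratio is well-defined by the implicit positivity hypothesis $\bx_p>\bzero$ of Lemma \ref{lem:solve-mle}, which forces $\lambda>0$.
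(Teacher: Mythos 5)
Your proposal is correct and follows essentially the same route as the paper: parts (i) and (ii) specialize Lemma \ref{lem:solve-mle} to the constraint set exactly as the paper does, and for part (iii) your Poisson-aggregation/multinomial-conditioning/tower-property argument is just the explicit derivation of the identity $\mbbE\bigl(X/(X+Y)\bigr)=\mbbE(X)/\mbbE(X+Y)$ for independent Poissons, which is the fact the paper cites directly. Your remark about well-definedness of the ratio on the event $\lambda>0$ is a point the paper glosses over as well, so no substantive divergence there.
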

\begin{proof}

If
$\hat\btheta:=
\begin{bmatrix} 
\hat\ba_1^\top & \cdots& \hat\ba_P^\top\end{bmatrix}^\top
\in\tilde\Theta$, then $\prod_{q\neq 1}(\bone_{N_q}^\top\hat\ba_q) = 1$, and  Equation \eqref{lem:1} simplifies to $\hat\ba_1 = \mat{X}_{(1)}\bone_{N/N_{1}}$.
For all other $p\in\{2,3,\dots,P\}$ it holds that $\prod_{q\neq p}(\bone_{N_q}^\top\hat\ba_q) = \bone_{N_1}^\top\hat\ba_1 = \lambda$, and Equation \eqref{lem:1} simplifies to $\hat\ba_p = \lambda^{-1} \mat{X}_{(p)}\bone_{N/N_{p}}$.
This establishes that $\hat\btheta$ is the unique critical point of the score function that is contained in the constraint set \eqref{eq:Theta}. To show it's an unbiased estimator,  note that under the rank one PCP model, for any $p$ it holds that $\mbbE(\mat{X}_{(p)}) = \ba_p(\otimes_{q\neq p}\ba_q)^\top$. Hence, using the notation of Equation \eqref{eq:identifR1}, we have that 
\begin{equation*}
    \begin{aligned}
\mbbE(\hat\ba_1) = \mbbE(\mat{X}_{(1)})\bone_{N/N_1} 
&= \ba_1(\otimes_{q\neq 1}\ba_q)^\top\bone_{N/N_1} 
\\&= \ba_1(\otimes_{q\neq 1}\ba_q^\top\bone_{N_q})
\\&= \ba_1(\lambda\lambda_1^{-1}) = \tilde \ba_1.
    \end{aligned}
\end{equation*}
For the other $\mbbE(\hat\ba_p)$ we will use the fact that if $X$ and $Y$ are independent Poisson random variables, then 
$\mbbE\big(X/(X+Y)\big) =\mbbE(X) / \mbbE(X+Y)$ holds.
Using this identity element-wise for each entry of $\mat{X}_{(p)}\bone_{N/N_{p}}$, we have that for any $p\in\{2,3,\dots,P\}$
\begin{equation*}
    \begin{aligned}
\mbbE(\hat\ba_p) = \mbbE\left(\dfrac{\mat{X}_{(p)}\bone_{N/N_{p}}}{\bone_{N_p}^\top \mat{X}_{(p)}  \bone_{N/N_p}}\right)
&= \dfrac{\mbbE(\mat{X}_{(p)}\bone_{N/N_{p}})}{\mbbE(\bone_{N_p}^\top \mat{X}_{(p)}  \bone_{N/N_p})}
\\&= \dfrac{\lambda\lambda_p^{-1}\ba_p}{\lambda} = \tilde\ba_p.
    \end{aligned}
\end{equation*}
Hence, it holds that for any $p$, $\mbbE(\hat\ba_p) = \tilde\ba_p$, and hence $\mbbE(\hat\btheta) = \tilde\btheta$.
\end{proof}

Having found maximum likelihood estimators for this rank one case, and framed it as an instance of the general case with important simplifications, now we proceed with obtaining the Fisher information matrix.

\subsection{CP rank one Fisher information matrix}\label{subsec:FIM1}

We discussed that in the rank one case, the complete data vector $\bz$ is equal to the observed data vector $\bx$. An important consequence of this fact can be viewed with the lens of the missing information principle of Equation \eqref{eq:missinginfoprinc}. Here, since there is nothing missing in $\bz$ (that is, it is observed), the missing Fisher information matrix $\FIM_m(\btheta,\bx)$ is zero, and the observed Fisher information matrix $\FIM_{obs}(\btheta,\bx)$ equals the complete Fisher information matrix $\FIM_{c}(\btheta,\bx)$. This result is intuitive, but also comes from the fact that $H(\btheta,\bar\btheta) = 0$, which we established in Section \ref{sec:r1_lvf}. The fact that $\FIM_m(\btheta,\bx) = 0$ is convenient because we do not need to use Oakes' theorem to find it, and instead, we can take derivatives directly to the loglikelihood. In the following lemma we obtain the observed and expected Fisher information matrices directly through Equations \eqref{eq:gradientR1} and \eqref{eq:hessian}.

\begin{lemma}\label{thm:FIM_rank1}
Let $\eqref{eq:rank1loglik}$ denote the loglikelihood for a rank one PCP model with Hessian \eqref{eq:hessian}. 
If $\bx_p:=\mat{X}_{(p)}  \bone_{N/N_p}$ is a positive vector for $ p \in [P]$ then 
the observed Fisher information matrix $\FIM_{obs}(\btheta,\bx)$ for the rank one PCP model is
\begin{subequations}
\begin{align}
\FIM_{obs}(\btheta,\bx)  = 
\begin{bmatrix}
\diag(\bx_1\oslash\ba_1^{*2}) &\lambda\lambda_{12}^{-1}\bone_{N_1}\bone_{N_2}^\top &\dots&\lambda\lambda_{1P}^{-1}\bone_{N_1}\bone_{N_p}^\top \\
\lambda\lambda_{12}^{-1}\bone_{N_2}\bone_{N_1}^\top &\diag(\bx_2\oslash\ba_2^{*2}) & \dots&\lambda\lambda_{2P}^{-1}\bone_{N_2}\bone_{N_P}^\top \\
\vdots & \vdots & \ddots & \vdots\\
\lambda\lambda_{1P}^{-1}\bone_{N_P}\bone_{N_1}^\top & \lambda\lambda_{P2}^{-1}\bone_{N_P}\bone_{N_2}^\top & \dots & \diag(\bx_P\oslash\ba_P^{*2}) \label{eq:FIMR1_obs}
\end{bmatrix}\,,
\end{align}
and the expected Fisher information matrix $\FIM(\btheta)$ of the rank one PCP model is
\begin{align}
\FIM(\btheta)  = 
\lambda\begin{bmatrix}
\lambda_{1}^{-1} \diag(\ba_1)^{-1} &\lambda_{12}^{-1}\bone_{N_1}\bone_{N_2}^\top &\dots& \lambda_{1P}^{-1}\bone_{N_1}\bone_{N_P}^\top \\
\lambda_{12}^{-1}\bone_{N_2}\bone_{N_1}^\top &\lambda_{2}^{-1}\diag(\ba_2)^{-1}  & \dots& \lambda_{2P}^{-1}\bone_{N_2}\bone_{N_P}^\top \\
\vdots & \vdots & \ddots & \vdots\\
\lambda_{1P}^{-1}\bone_{N_P}\bone_{N_1}^\top & \lambda_{P2}^{-1}\bone_{N_P}\bone_{N_2}^\top & \dots & \lambda_{P}^{-1}\diag(\ba_P)^{-1}  \label{eq:FIMR1_exp}
\end{bmatrix}.
\end{align}
\end{subequations}
\end{lemma}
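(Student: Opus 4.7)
The plan is to read off the observed Fisher information directly from the block Hessian already given in Equation~\eqref{eq:hessian}, and then to obtain the expected Fisher information block-by-block by taking expectations under the rank one PCP model. Since there is no latent information in the rank one case (as argued in Section~\ref{sec:r1_lvf}, $H(\btheta,\bar\btheta)=0$, so $\FIM_m(\btheta,\bx)=\mathbf{0}$), we do not need Oakes' theorem or the missing information principle here; the Hessian of $\ell(\bx|\btheta)$ in Equation~\eqref{eq:hessian} \emph{is} the complete Fisher information, up to sign.

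First I would establish \eqref{eq:FIMR1_obs}. By definition, $\FIM_{obs}(\btheta,\bx) = -\nabla^2_{\btheta,\btheta}\ell(\bx|\btheta)$, and this is a $P\times P$ block matrix whose $(p,q)$-block is $-\nabla^2_{\ba_p,\ba_q}\ell(\bx|\btheta)$. Negating each entry of the case split in \eqref{eq:hessian} immediately yields $\diag(\bx_p\oslash\ba_p^{*2})$ on the diagonal blocks and $\lambda\lambda_{pq}^{-1}\bone_{N_p}\bone_{N_q}^\top$ on the off-diagonal blocks, which is exactly the form claimed in \eqref{eq:FIMR1_obs}.

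Next I would compute $\FIM(\btheta) = \mbbE[\FIM_{obs}(\btheta,\bx)]$ blockwise. The off-diagonal blocks do not depend on $\bx$, so their expectations equal themselves, giving $\lambda\lambda_{pq}^{-1}\bone_{N_p}\bone_{N_q}^\top$ directly. For the diagonal blocks I need $\mbbE[\diag(\bx_p\oslash\ba_p^{*2})]$. Under the rank one PCP model \eqref{eq:pcpR1}, $x_{\bi}\sim\Poi(\prod_p \ba_p(i_p))$ so $\mbbE[x_{\bi}]=\prod_p \ba_p(i_p)$. Summing over all modes except $p$ gives
\begin{equation*}
\mbbE[\bx_p] \;=\; \mbbE[\mat{X}_{(p)}\bone_{N/N_p}] \;=\; \ba_p \prod_{q\neq p}\lambda_q \;=\; \lambda\lambda_p^{-1}\,\ba_p,
\end{equation*}
using $\lambda=\prod_q \lambda_q$. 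Therefore the $i$th diagonal entry of $\mbbE[\diag(\bx_p\oslash\ba_p^{*2})]$ equals $\lambda\lambda_p^{-1}\,\ba_p(i)/\ba_p(i)^2 = \lambda\lambda_p^{-1}/\ba_p(i)$, i.e. $\mbbE[\diag(\bx_p\oslash\ba_p^{*2})]=\lambda\lambda_p^{-1}\diag(\ba_p)^{-1}$. Assembling the $P\times P$ blocks and factoring out the scalar $\lambda$ reproduces \eqref{eq:FIMR1_exp}.

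There is essentially no hard step: the only minor care required is to verify that the entries of $\bx_p$ have well-defined expectations (guaranteed by the assumption $\bx_p>\bzero$ componentwise, combined with the finiteness of $\prod_p\ba_p(i_p)$), so that the interchange of expectation with the diagonal operator is valid entrywise. The identity $\lambda=\lambda_p\prod_{q\neq p}\lambda_q$ is the only algebraic manipulation needed to match the stated form of \eqref{eq:FIMR1_exp}, and the block positions of the $\lambda_{pq}^{-1}$ terms follow the same symmetric pattern as in \eqref{eq:FIMR1_obs} since the off-diagonal blocks are unaffected by taking expectations.
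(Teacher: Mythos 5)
Your proposal is correct and follows essentially the same route as the paper: read off $\FIM_{obs}(\btheta,\bx)$ as the negative of the block Hessian in Equation~\eqref{eq:hessian}, then take expectations blockwise using $\mbbE[\bx_p]=\lambda\lambda_p^{-1}\ba_p$ to turn the diagonal blocks into $\lambda\lambda_p^{-1}\diag(\ba_p)^{-1}$ while the off-diagonal blocks are deterministic. The only cosmetic difference is that the paper cites the computation of $\mbbE[\bx_p]$ from the proof of Lemma~\ref{lemma:rank1bias} rather than rederiving it inline.
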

\begin{proof}
Since we are assuming that the factor vectors $\ba_p$ are positive, the observed Fisher information matrix $\FIM_{obs}(\btheta,\bx)$ is given by the negative of the Hessian \eqref{eq:hessian}.  
The expression for the expected Fisher information matrix follows from the definition $\FIM(\btheta) = \mbbE\big(\FIM_{obs}(\btheta,\bx)\big)$, and
$   \mbbE(\bx_p) =
    \ba_p\lambda\lambda_{p}^{-1}
$ (see the proof of Lemma \ref{lemma:rank1bias}). Hence, we have
\begin{align*}
    \mbbE(\bx_p\oslash\ba_p^{*2}) = \mbbE(\bx_p) \oslash\ba_p^{*2} = \lambda\lambda_{p}^{-1}\bone_{N_p} \oslash \ba_p\,.
\end{align*}
which completes the proof after noting that $\diag(\bone_{N_p} \oslash \ba_p) = \diag(\ba_p)^{-1}$.
\end{proof}

Lemma  \ref{thm:FIM_rank1} provides Fisher information matrices for any factor scaling, as long as $\lambda = \prod_p (\ba_p^\top\bone_{N_p})$. Simplifications can be made for specific scalings. For example, if $\tilde\btheta = [\tilde\ba_1^\top\dots\tilde\ba_P^\top]^\top$ has scaled factors so that $\bone_{N_p}^\top\tilde\ba_p = \lambda^{1/P}$ for all $p$, then 
$$
\FIM(\tilde\btheta)  = 
\lambda^{1-2/P}
\begin{bmatrix}
\lambda^{1/P}\diag(\tilde\ba_1)^{-1} & \dots&  \bone_{N_1}\bone_{N_P}^\top \\
\vdots & \ddots & \vdots\\
\bone_{N_P} \bone_{N_1}^\top & \dots & \lambda^{1/P}\diag( \tilde\ba_P)^{-1} \label{fim}
\end{bmatrix},
$$
The matrix above can be split it into a block-diagonal matrix plus a rank one matrix. This fact will be leveraged later in Theorem \ref{thm:Rank1_identf} to  find many of its spectral properties. Note that if $\btheta$ has an arbitrary scaling, then there is a non-singular diagonal matrix $\Gamma$ that satisfies $\tilde\btheta = \Gamma\btheta$, ($\Gamma$ here is block-diagonal with $p$th diagonal block $\lambda^{1/P}\lambda_p^{-1}I_{N_p}$), and hence the Fisher information matrix $\FIM(\btheta)$ of Equation \eqref{eq:FIMR1_exp} can be written in terms of that of the simplified Fisher information matrix as 
\begin{equation}\label{eq:FIMequivalency}
    \FIM(\btheta) = \Gamma \, \FIM(\tilde\btheta) \, \Gamma^\top.
\end{equation}
The identity above implies that while different scalings will lead to different Fisher information matrices, all proper scalings (proper meaning that $\Gamma$ is non-singular) will satisfy \eqref{eq:FIMequivalency}, and hence, will result in equivalent Fisher information matrices up to some scaling $\Gamma$. 
We can now establish conditions for identifiability using these two Fisher information matrices.

\subsubsection{Rank one identifiability}\label{sec:identifR1}

Identifiability in statistical modeling ensures that different parameter values lead to different probability distributions of the observed data, allowing for reliable inference. In the context of the rank one PCP model, we say that PCP is non-identifiable if there exists $\btheta_1\neq\btheta_2$ for which $\ell(\btheta_1)=\ell(\btheta_2)$ and identifiable otherwise. 

The discussion following Lemma~\ref{lem:solve-mle} explains that the rank one PCP model $\ten{M}$ is unique independent of the scaling ambiguity with the factor vectors. The ambiguity arises because numerous scalings satisfy the constraint \eqref{lem:constraint}. Because the factor vectors comprise the parameter vector $\btheta$, the rank one PCP model is not identifiable and suggests that $P-1$ components of $\btheta$ are redundant. 
Such a redundancy is not unique to the Poisson distribution and has been known for general CP models \citep{sidiropoulosandbro00}, but the over-parameterization of $\btheta$ has not been studied through the lenses of the Fisher information matrix. 
The benefit of an information-theoretic approach is the ability to recast the analysis of low-rank tensor models using the tools of mathematical statistics. For instance, can we precisely describe in what sense the inference problem is well-posed and the impact upon the determination of an efficient estimator?

The expected and observed Fisher information matrices obtained in Theorem \ref{thm:FIM_rank1} play crucial roles in assessing identifiability. The expected Fisher information matrix $\FIM(\btheta)$ provides a measure of the average information content about the parameters across all possible data sets and is used to assess structural identifiability. If $\FIM(\btheta)$ is non-singular, it suggests that the parameters are structurally identifiable, meaning the model structure allows for unique parameter estimation. On the other hand, the observed Fisher information matrix $\FIM_{obs}(\btheta,\bx)$ provides information about a sample-specific measure of the information content about the parameters. If $\FIM_{obs}(\btheta,\bx)$ is non-singular, it indicates that the parameters are identifiable given the specific instance $\ten{X}$. In the following theorem we demonstrate that $\FIM(\btheta)$ is singular and determine its rank. 
In the discussion that follows the theorem, we relate the singularity of $\FIM(\btheta)$ to the constraint \eqref{lem:constraint}.  

\begin{theorem}\label{thm:Rank1_identf}
If 
$\btheta_{\bullet} = \begin{bmatrix}
    \ba_1^\top&\ba_{2\bullet}^\top&\dots&\ba_{P\bullet}^\top
\end{bmatrix}^\top\in\mbbR_{+}^r$, where $\ba_p^\top =\begin{bmatrix}
    a_{p1} & \ba_{p\bullet}^\top
\end{bmatrix}^\top \in\mbbR_{+}^{N_p} $ for $p \in \lbrace 2,\ldots, P \rbrace$ and $r = \big(\sum_{p=1}^P N_p \big)-P+1$ then 
\begin{enumerate}
    \item The matrix $
\FIM_F(\btheta) 
\coloneqq 
-\mbbE\left(\nabla^2_{\btheta_{\bullet},\btheta_{\bullet}}  \ell(\btheta)\right)
$
is non-singular of rank $r$.
\item 
The Fisher information matrix $\FIM(\btheta)$ is singular of rank $r$.
\item
The rows of 
$\mathbf{H}^\top \coloneqq \begin{bmatrix}
        -\FIM_{F,N}(\btheta)\FIM_F^{-1}(\btheta)  &
        \mathbf{I}_{P-1}
\end{bmatrix}^\top \in \mathbb{R}^{(P-1) \times (r+P-1)}$ are basis for the nullspace of $\FIM(\btheta)$.
\end{enumerate}
\end{theorem}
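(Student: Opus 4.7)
My approach is to establish Part 2 first via a direct analysis of the quadratic form $\mathbf{v}^\top \FIM(\btheta)\mathbf{v}$, and then derive Parts 1 and 3 as short consequences. For Part 2, using the explicit block structure of $\FIM(\btheta)$ from Lemma~\ref{thm:FIM_rank1}, I would expand the quadratic form with $\mathbf{v} = [\mathbf{v}_1^\top, \ldots, \mathbf{v}_P^\top]^\top$ and introduce $s_p \coloneqq \bone_{N_p}^\top \mathbf{v}_p$ and $t_p \coloneqq s_p/\lambda_p$. The diagonal blocks contribute $\lambda \sum_p \lambda_p^{-1}\sum_i v_{p,i}^2/a_{p,i}$, while the off-diagonal blocks contribute $\lambda\bigl[(\sum_p t_p)^2 - \sum_p t_p^2\bigr]$. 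The Cauchy--Schwarz inequality applied with positive weights $a_{p,i}$ gives $\sum_i v_{p,i}^2/a_{p,i} \geq s_p^2/\lambda_p$, with equality iff $\mathbf{v}_p = c_p \ba_p$ for some $c_p \in \mathbb{R}$. Combining these yields
\[
\mathbf{v}^\top \FIM(\btheta)\mathbf{v} \;=\; \lambda \sum_p \Bigl(\lambda_p^{-1}\sum_i v_{p,i}^2/a_{p,i} - t_p^2\Bigr) + \lambda\Bigl(\sum_p t_p\Bigr)^2 \;\geq\; 0,
\]
with equality iff $\mathbf{v}_p = c_p\ba_p$ for every $p$ and $\sum_p c_p = 0$. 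This identifies the null space of $\FIM(\btheta)$ as the $(P-1)$-dimensional subspace $\{[c_1\ba_1^\top,\ldots,c_P \ba_P^\top]^\top : \sum_p c_p = 0\}$, proving Part 2.

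For Part 1, I would observe that $\FIM_F(\btheta)$ is the principal submatrix of $\FIM(\btheta)$ obtained by deleting the rows and columns indexed by $(p,1)$ for $p=2,\ldots,P$. Any null vector of $\FIM_F(\btheta)$ zero-pads to a null vector of $\FIM(\btheta)$ whose coordinates at positions $(p,1)$ vanish for $p\geq 2$. By the characterization from Part 2, such a vector has the form $[c_1\ba_1^\top,\ldots,c_P\ba_P^\top]^\top$ with $\sum_p c_p = 0$, and the vanishing of the $(p,1)$ coordinates forces $c_p a_{p,1} = 0$ for $p\geq 2$. Since the positivity assumption on $\btheta$ gives $a_{p,1} > 0$, we conclude $c_p = 0$ for $p\geq 2$, and then $c_1 = -\sum_{p\geq 2} c_p = 0$. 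Thus $\FIM_F(\btheta)$ has trivial null space and is non-singular of rank~$r$.

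For Part 3, I would reorder coordinates to place $\btheta_\bullet$ first and write $\FIM(\btheta) = \begin{bmatrix}\FIM_F & \FIM_{F,N}^\top \\ \FIM_{F,N} & \FIM_{N,N}\end{bmatrix}$, so that $\mathbf{H} = \begin{bmatrix}-\FIM_F^{-1}\FIM_{F,N}^\top \\ \mathbf{I}_{P-1}\end{bmatrix}$ (using symmetry of $\FIM_F$). A direct block multiplication yields
\[
\FIM(\btheta)\,\mathbf{H} \;=\; \begin{bmatrix}\mathbf{0} \\ \FIM_{N,N} - \FIM_{F,N}\FIM_F^{-1}\FIM_{F,N}^\top\end{bmatrix},
\]
so it suffices to show the Schur complement $S \coloneqq \FIM_{N,N} - \FIM_{F,N}\FIM_F^{-1}\FIM_{F,N}^\top$ vanishes. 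The block $LDU$ identity $\text{rank}(\FIM(\btheta)) = \text{rank}(\FIM_F(\btheta)) + \text{rank}(S)$, valid because $\FIM_F$ is invertible by Part 1, together with Part 2 showing both ranks equal $r$, forces $\text{rank}(S) = 0$, hence $S = \mathbf{0}$. The columns of $\mathbf{H}$ are linearly independent because of the $\mathbf{I}_{P-1}$ block, and by dimension counting they form a basis of the $(P-1)$-dimensional null space of $\FIM(\btheta)$.

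The one step that requires care is the equality characterization in the Cauchy--Schwarz estimate of Part 2: the argument requires the \emph{signed} proportionality $\mathbf{v}_p = c_p\ba_p$, which rests essentially on the strict positivity of every entry of $\ba_p$ rather than on any sign condition on $\mathbf{v}$. Once this is established, the remainder of the proof is routine block-matrix bookkeeping. As an alternative, one could first apply the reparameterization identity~\eqref{eq:FIMequivalency} to pass to the balanced scaling $\bone_{N_p}^\top \tilde\ba_p = \lambda^{1/P}$, under which $\FIM(\tilde\btheta)$ decomposes as a positive-definite block-diagonal matrix plus a rank-one correction, and the null space can be read off directly via the matrix determinant lemma.
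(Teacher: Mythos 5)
Your proof is correct, but it takes a genuinely different route from the paper's. The paper proceeds by brute-force linear algebra on the block partition \eqref{eq:blockFIM}: it computes $\FIM_F^{-1}(\btheta)$ in closed form via repeated applications of the Sherman--Morrison formula, verifies by direct calculation that the Schur complement $\FIM_N - \FIM_{F,N}\FIM_F^{-1}\FIM_{F,N}^\top$ vanishes, and then invokes Guttman rank additivity to conclude $\text{rank}(\FIM(\btheta)) = r$ and $\FIM(\btheta)\mathbf{H}=\mathbf{0}$. You instead characterize the null space first, by expanding the quadratic form $\mathbf{v}^\top\FIM(\btheta)\mathbf{v}$ and applying Cauchy--Schwarz with weights $a_{p,i}$ to each block; the equality case pins the null space down as $\{[c_1\ba_1^\top,\ldots,c_P\ba_P^\top]^\top : \sum_p c_p=0\}$, which immediately gives Part 2, and your deduction of Part 1 from the PSD fact that a zero-padded null vector of a principal submatrix is a null vector of the full matrix is clean and correct (the positivity of $a_{p1}$ is exactly what forces $c_p=0$). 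For Part 3 you run the rank-additivity argument in the opposite direction from the paper---using the already-known ranks to conclude $S=\mathbf{0}$ rather than computing $S$ to obtain the rank---which is valid since $\FIM_F$ is invertible by your Part 1. What your approach buys is an explicit, interpretable description of the null space as the $(P-1)$ scaling indeterminacies $c_p\ba_p$ with $\sum_p c_p = 0$, and it avoids the laborious Sherman--Morrison computation entirely; what the paper's approach buys is a closed-form expression for $\FIM_F^{-1}(\btheta)$, which is needed anyway to write down $\mathbf{H}$ and the projector $\mathbf{P}$ concretely and would be useful for downstream Cram\'er--Rao computations. Your flagged concern about the equality case of Cauchy--Schwarz is handled correctly: strict positivity of the entries of $\ba_p$ is indeed all that is required, and no sign condition on $\mathbf{v}$ is needed.
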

\begin{proof}
Without loss of generality permute the rows and columns of $\FIM(\btheta)$ so that
\begin{equation}\label{eq:blockFIM}
    \FIM(\btheta) = \begin{bmatrix}
    \FIM_F(\btheta) & \FIM_{F,N}^\top(\btheta) \\
    \FIM_{F,N}(\btheta) & \FIM_{N}(\btheta) 
\end{bmatrix} \in \mathbb{R}^{(r+P-1) \times (r+P-1)}\,,
\end{equation}
with
\begin{align*}
\FIM_F(\btheta)  &= 
\lambda\begin{bmatrix}
\lambda_{1} \diag(\ba_1)^{-1} &\lambda_{12}^{-1}\bone_{N_1}\bone_{N_2-1}^\top &\dots& \lambda_{1P}^{-1}\bone_{N_1}\bone_{N_p-1}^\top \\
\lambda_{12}^{-1}\bone_{N_2-1}\bone_{N_1}^\top &\lambda_{2}\diag(\ba_{2\bullet})^{-1} & \dots& \lambda_{2P}^{-1}\bone_{N_2-1}\bone_{N_p-1}^\top \\
\vdots & \vdots & \ddots & \vdots\\
\lambda_{1P}^{-1}\bone_{N_p-1}\bone_{N_1}^\top & \lambda_{P2}^{-1}\bone_{N_p-1}\bone_{N_2-1}^\top & \dots & \lambda_{P}\diag(\ba_{P\bullet})^{-1}
\end{bmatrix} \in\mbbR^{r\times r}\,,\\
\FIM_{F,N}(\btheta) & = \lambda\begin{bmatrix}
    \lambda_{21}^{-1}\bone_{N_1}^\top & \bzero_{N_2-1}^\top & \lambda_{23}^{-1}\bone_{N_3-1}^\top &\dots & \lambda_{2P}^{-1}\bone_{N_P-1}^\top \\
    \lambda_{31}^{-1}\bone_{N_1}^\top &\lambda_{32}^{-1}\bone_{N_2-1}^\top & \bzero_{N_3-1}^\top
    &\dots & \lambda_{3P}^{-1}\bone_{N_P-1}^\top \\
    \vdots & \vdots & \vdots & \ddots & \vdots \\
    \lambda_{P1}^{-1}\bone_{N_1}^\top & \lambda_{P2}^{-1}\bone_{N_2-1}^\top & \lambda_{P3}^{-1}\bone_{N_3-1}^\top &\dots & \bzero_{N_P-1}^\top
\end{bmatrix} \in\mbbR^{(P-1)\times r}\,, \\
\intertext{and}
\FIM_N(\btheta) & = \lambda\begin{bmatrix}
    \lambda_2^{-1}a_{21}^{-1} & \lambda_{23}^{-1}&\dots & \lambda_{2P}^{-1}\\ 
    \lambda_{23}^{-1}&  \lambda_3^{-1}a_{31}^{-1}&\dots & \lambda_{3P}^{-1}\\
    \vdots & \vdots & \ddots & \vdots \\
    \lambda_{2P}^{-1} & \lambda_{3P}^{-1} & \dots & \lambda_P^{-1}a_{P1}^{-1}
\end{bmatrix} \in\mbbR^{(P-1)\times (P-1)}\,.
\end{align*}
 Let $
\mathbf{J}_1=\lambda_1\diag(\ba_1) + \sum_{p=2}^P\left(\frac{\lambda_p}{a_{p1}} - 1\right)\ba_1\ba_1^\top ,
$ and 
$\mathbf{J}_p = \lambda_p\diag(\ba_{p\bullet}) + \frac{\lambda_p}{a_{p1}}\ba_{p\bullet}\ba_{p\bullet}^\top$
for $p\in\{2,\dots,P\}$. Then the principal submatrix $\FIM_F(\btheta)$ of the block matrix in Equation \eqref{eq:blockFIM} is nonsingular and repeated applications of the Sherman-Morrison formula imply that
$$
\FIM_F^{-1}(\btheta) = 
\lambda^{-1}\begin{bmatrix}
    \mathbf{J}_1
    &
    -\frac{\lambda_2}{a_{21}}\ba_1\ba_{2\bullet}^\top
    & \dots &
    -\frac{\lambda_P}{a_{P1}}\ba_1\ba_{P\bullet}^\top    
    \\
    -\frac{\lambda_2}{a_{21}}\ba_{2\bullet}\ba_1^\top 
    &
    \mathbf{J}_2
    & \dots &
    0
    \\
    \vdots & \vdots &\ddots & \vdots \\
    -\frac{\lambda_P}{a_{P1}}\ba_{P\bullet}\ba_1^\top   
    & 
    0
    &\dots & 
    \mathbf{J}_P
\end{bmatrix},
$$
and hence 
$\text{rank}\big(\FIM_F(\btheta)\big) = r$.
Furthermore, the Schur complement 
\begin{align} \label{schur-comp}
\FIM(\btheta) /\ \FIM_F(\btheta)  \coloneqq \FIM_N(\btheta) - \FIM_{F,N}(\btheta) \FIM_F^{-1}(\btheta) \FIM_{F,N}^\top(\btheta)(\btheta) = \mathbf{0}
\end{align}
which implies 
$\text{rank}(\FIM_F(\btheta) /\ \FIM_F(\btheta) ) =0$. Hence, the result follows from the Guttman rank additivity formula
$$
\text{rank}\big(\FIM(\btheta)\big) = \text{rank}\big(\FIM_F(\btheta)\big)  + \text{rank}\big(\FIM(\btheta) /\ \FIM_F(\btheta) \big) =r.
$$
Finally, 
$\FIM(\btheta) \mathbf{H} = \mathbf{0}$,
is a consequence of Equation \eqref{schur-comp} and established our third claim.

\end{proof}

We have shown that$\FIM_F(\btheta)$ is a principal submatrix of $\FIM(\btheta)$ and that $\FIM_F(\btheta)$ is nonsingular and has the same rank as $\FIM(\btheta)$. This means that $\btheta_{\bullet}$ suffices to characterize the PCP model. Note that $\FIM_F(\btheta)$ is the Fisher information that corresponds to the smaller parameter vector $\btheta_{\bullet}$.

When constructing $\btheta_{\bullet}$ we chose  $\ba_p^\top =\begin{bmatrix}
a_{p1} & \ba_{p\bullet}^\top
\end{bmatrix}^\top$, meaning that $\ba_{p\bullet}$ is $\ba_{p}$ with its first entry $a_{p1}$ removed. Removing the first entry of $\ba_p$ is not necessary, and we could have chosen to remove any one entry without loss of generality. This is because for $\mat{X}\sim \Poi (\ba_1\ba_2^\top)$, permuting the rows of $\mat{X}$ results in applying the same permutation to the vector $\ba_1$.  In a similar fashion, we could permute the slices of $\ten{X}$ across the $p$th mode, leading to applying the same permutation to the entries of $\ba_p$, and Theorem \ref{thm:Rank1_identf} would still hold. Similarly we chose $\btheta_{\bullet} = \begin{bmatrix} \ba_1^\top &\ba_{2\bullet}^\top & \cdots& \ba_{P\bullet}^\top\end{bmatrix}^\top$, meaning that we chose to remove an entry to all factors but the first one $\ba_1$. Choosing the first one as the one without a removed entry is not necessary, and we could have chosen any one factor vector $\ba_p$ instead, without loss of generality. This is because for $\mat{X}\sim \Poi (\ba_1\ba_2^\top)$, the transpose $\mat{X}^\top\sim \Poi (\ba_2\ba_1^\top)$. Similarly, we could permute the $P$ modes of the tensor $\ten{X}$ and this would result in applying the same permutation to the $P$ factors $\ba_1,\dots,\ba_P$, and Theorem \ref{thm:Rank1_identf} would still hold.

We can also conclude that
\begin{align*}
    \mathbf{P} & = \mathbf{I} - \mathbf{H} \big( \mathbf{H}^\top \mathbf{H}\big)^{-1} \mathbf{H}^\top
\end{align*}
is the orthogonal projector onto the range of $\FIM(\btheta)$ and 
is easily constructed since $\mathbf{H}^\top \mathbf{H}$ is a symmetric positive definite matrix of order $P-1$. Hence, given an estimate $\btheta$, the matrix vector product 
\begin{align*}
    \mathbf{P} \btheta = \btheta - \mathbf{H} \mathbf{s} \text{ where } \mathbf{H}^\top \mathbf{H} \mathbf{s} =  \mathbf{H}^\top \btheta \in \mathbb{R}^{P-1}
\end{align*} 
results in a vector that lies in the range of $\FIM(\btheta)$, which no longer contains components in the direction of the $P-1$ redundant parameters. Indeed, the vector $\mathbf{P} \btheta$ is orthogonal to the nullspace of $\FIM(\btheta)$ of dimension $P-1$. 
The use of $\mathbf{P}$ avoids the possibly arduous task of partitioning $\FIM(\btheta)$ and instead works directly with $\btheta$.

\section{Numerical Experiments}\label{sec:simus}
We present numerical experiments that corroborate our expression for the Fisher information matrix of Theorem \ref{thm:OFIM_genR}, and its conjectured rank in Conjecture \ref{conjecture:rank}. 
In these experiments we consider PCP-distributed random tensors $\ten{X}\sim\Poi(\ten{M})$, where $\ten{M} = [\![\mat{A}_1,\dots,\mat{A}_P]\!]\in\mbbR_{+}^{N\times \dots\times N}$ is a tensor of order $P$ with rank $R$, and its $N^P$ entries average exactly $S$.  We will study different combinations of $N,S,R,P$.

We generate the factor matrices $\mat{A}_1,\dots,\mat{A}_P$ as follows. First, to ensure that the matrix factors are equally weighted, i.e, $\mat{A}_1^\top\bone_{N} = \dots=\mat{A}_P^\top\bone_{N} = \bone_R$, we generate the columns of each $\mat{A}_p\in\mbbR_{+}^{N\times R}$  uniformly at random from a unit simplex. Because the Poisson random variable is degenerate when the Poisson coefficient is zero, we also constrain the simplex to have minimum entry $1/(100N)$. Controlling for $S$ is important because Poisson random variables have equal mean and variance, and hence $S$ adjusts the signal-to-noise ratio. To ensure that the entries of $\ten{M}$ average exactly $S$, a weight vector was chosen as $\blambda \coloneqq (SN^P/\sum_{r=1}^Rr)*\begin{bmatrix}
    1 & \dots &R
\end{bmatrix}^\top$ before reweighting each factor matrix as $\mat{A}_p\leftarrow \mat{A}_p\diag(\blambda^{*1/P})$. Our choice of $\blambda$ ensures that the $R$ different rank one components that make up $\ten{M}$ are weighted differently.

\subsection{Monte Carlo validation of the expected Fisher information matrix}\label{sec:MCFIM}

We will support Theorem \ref{thm:OFIM_genR} by comparing $\FIM(\btheta)$ with a Monte Carlo approximation that uses Bartlett's identity of Equation \eqref{eq:bartletts}.  Our Monte Carlo procedure is as follows. We generate $K$ independent draws $\ten{X}_1,\dots,\ten{X}_K$ from $\Poi(\ten{M})$, and use Equation \eqref{eq:score1} to compute the empirical score values $\bs_1,\dots,\bs_K$.
According to the strong law of large numbers and Bartlett's identity, we have that as $K\rightarrow \infty$,
\begin{equation}\label{eq:MC_FIM}
\bmu_K\coloneqq \dfrac{1}{K}\sum_{k=1}^K \bs_k \overset{a.s.}{\rightarrow} \bzero
,\quad \text{and} \quad 
\widehat\FIM_K(\btheta) \coloneqq \dfrac{1}{K}\sum_{k=1}^K (\bs_k-\bmu_K)(\bs_k-\bmu_K)^\top \overset{a.s.}{\rightarrow} \FIM(\btheta).
\end{equation}
Hence, $\widehat\FIM_K(\btheta)$ is a Monte Carlo approximation to $\FIM(\btheta)$.
For each $\btheta$ generated as in the beginning of Section \ref{sec:simus}, we computed $\FIM(\btheta)$ by Theorem \ref{thm:OFIM_genR} and its approximation $\widehat\FIM_K(\btheta)$ through Equation \eqref{eq:MC_FIM}, where $K=4,16,64,256,1024$, $N=10,25,50,100$, $S=0.1,1,10,100$, $R=1,2,3,4$ and $P=3$.
To quantify how accurately $\widehat\FIM_K(\btheta)$ approximates $\FIM(\btheta)$, we use the relative error
$
||\FIM(\btheta) - \widehat\FIM_K(\btheta)||_F/||\FIM(\btheta)||_F,
$ and to account for sampling variability, for each $\btheta$ we obtained 100 different $\widehat\FIM_K(\btheta)$ and their relative errors. 
\begin{figure}
\centering
\includegraphics[width=0.8\textwidth]{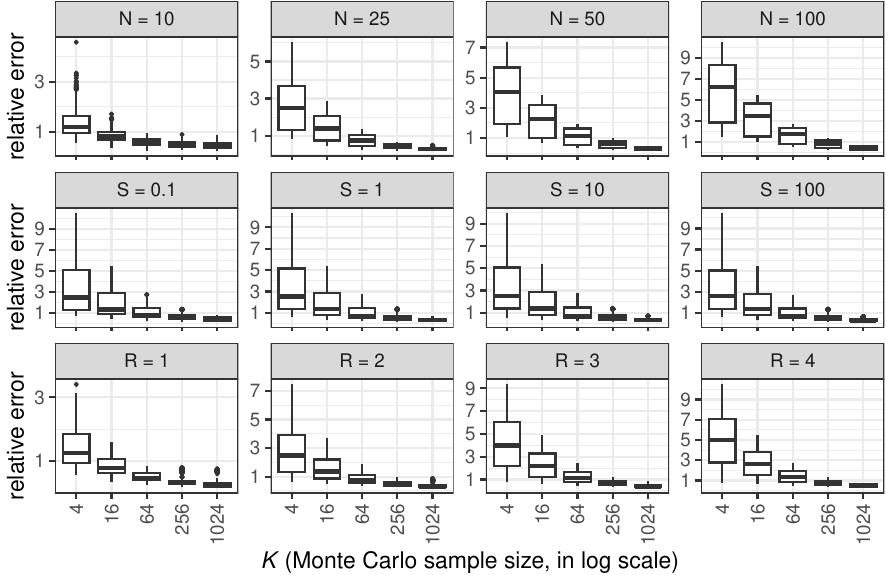}
\caption{Boxplots of relative error $
||\FIM(\btheta) - \widehat\FIM_K(\btheta)||_F/||\FIM(\btheta)||_F
$  between the Monte-Carlo estimated $\widehat\FIM_K(\btheta)$ and our analytical expression for $\FIM(\btheta)$ in Theorem \ref{thm:OFIM_genR}, across different values of $N,S,R,K$.}
\label{fig:MCFIM}
\end{figure}

In Figure \ref{fig:MCFIM} we show boxplots of relative errors of $\widehat\FIM_K(\btheta)$ for different values of $N,S,R$ and $K$, across 100 repetitions. We observe a general decrease in relative error as the Monte Carlo sample size $K$ increases, across all values of $N,S,R$. The maximum relative error (which occurs when $K=4$) is about $\sqrt{N}$. There is also an increase in relative error with increasing $R$, with the $R=1$ case having smaller error rates than the $R=2,3,4$ cases, which have similar error rates. This can be explained through the missing information principle of Equation \eqref{eq:missinginfoprinc}, which states that the  $R=2,3,4$ cases (which have missing information) have an the extra term $\FIM_{m}(\btheta,\bx)$, which is zero for the $R=1$ case. Finally, there is minimal difference in relative error for different values of $S$, indicating that the magnitude of the Poisson rates in $\ten{M}$ don't have an effect in the accuracy of the Monte-Carlo approximation. Hence, this experiment corroborates that our analytical expression for $\FIM(\btheta)$ in Theorem \ref{thm:OFIM_genR} matches Bartlett's identity in Equation \eqref{eq:bartletts}. 

\subsection{Numerical validation of the conjectured Fisher information matrix rank}\label{sec:sim_identif}
We will provide evidence for Conjecture \ref{conjecture:rank} by comparing the conjectured rank of the Fisher information matrix $\FIM(\btheta)$ with its numerical approximation.  For PCP models described at the beginning of Section \ref{sec:simus}, Conjecture \ref{conjecture:rank} states that 
\begin{equation}\label{eq:matrank_equalN}
\text{rank}(\FIM(\btheta)) = \min\Big(PNR-L, N^P\Big),
\end{equation}
where $Q=(\min\{R,N\})^2$ if $P=2$, and $Q=R(P-1)$ if $P>2$.
We generated Fisher information matrices $\FIM(\btheta)$ for the cases where $R=1,2,3,4,5$, $P=2,3,4$, $N=10,25,50,100$, and $S=4$. To account for sampling variability we generated $100$ different $\FIM(\btheta)$ for each combination of $R,P,N$. (explain S does not affect here)

For each Fisher information matrix $\FIM(\btheta)$, we calculated its numerical rank as the number of eigenvalues larger than a small threshold, which we set to the largest eigenvalue times the square root of machine epsilon ($2^{-52}$).
The eigenvalues were numerically approximated using the python function \texttt{numpy.linalg.eigvalsh} \citep{harrisetal20}, which calls LAPACK's \texttt{ssyevd} \citep{andersonetal99} to obtain the eigenvalues through a divide-and-conquer method. In Figure~\ref{fig:FIMrank}(a) we display the ratio between the conjectured and numerical ranks, and display them across all values of $P$, $N$ and $R$. The ratio is one across all combinations, providing numerical evidence for our conjecture.

Conjecture \ref{conjecture:rank} also applies in the underdetermined case where $N^P<PNR-L$. To study this setting, we chose $P = 3$, $N = 8$, $R=1,2,\dots,46$ so that conjecture \ref{conjecture:rank} states that
$$
\text{rank}(\FIM(\btheta))=\begin{cases}
    PNR-L  & R<512/22\approx 23.3 \\
    N^P & R>512/22
\end{cases}.
$$ 
In Figure~\ref{fig:FIMrank}(b) we display two curves as functions of $R$: in green is the ratio of the numerical rank with $N^P$, and in orange is the ratio of the numerical rank with $PNR-L$. We also included a vertical dashed line at $R = 512/22\approx 23.3$. As conjectured, we see a flat orange line in one when $R<512/22$, and a flat green line in one when $R>512/22$. We note that as $R$ increases beyond $R = 23$, the Fisher information matrix $\FIM(\btheta)$ gets larger in dimension, but its rank doesn't go beyond $N^P = 512$.

\begin{figure}
\centering
\begin{tabular}{cc}
\raisebox{.55cm}{
\includegraphics[width=.5\linewidth,page = 1]{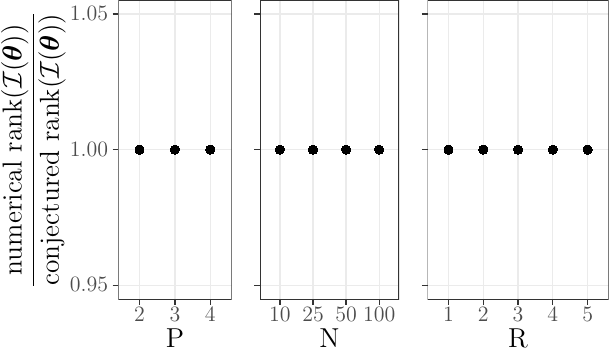}}
&
\includegraphics[width=.35\linewidth,page = 2]{figures/FIMrank-crop.pdf}\\
(a) & (b)
\end{tabular}
\caption{(a) Comparison of numerical and conjectured Fisher information matrix ranks across various values of $P$, $N$ and $R$. The numerical and conjectured ranks are the same in all cases. (b) Comparison of the numerical Fisher information matrix rank against $N^P$ (in green) and $PNR-L$ (in orange) when $P=3$ and $N=8$ are fixed, but the CP rank increases $R=1,2,\dots,46$ . According to our conjecture, we should expect a flat orange line in one when $R<23$ and a flat green line in one when $R>23$.  }
\label{fig:FIMrank}
\end{figure}

\section{Discussion}

In this article we advanced the understanding of the PCP tensor model by introducing a latent-variable formulation that simplified parameter inference, which was traditionally complicated by the presence of sums within log terms of the loglikelihood function. Through this latent variable formulation, we formulated EM algorithms for maximum loglikelihood and framed existing algorithms as special cases. We also derived the observed and expected Fisher information matrices through the use of Oakes’ theorem, which provided crucial insights into the well-posedness of the model, such as the role that CP rank plays in the identifiability and underdeterminacy of the model. Our numerical experiments further validated these theoretical findings, illustrating the effect that an increasing tensor rank has on the rank of the Fisher information matrix, and showing that our application of Oakes’ theorem matches what we would expect from using Bartlett’s identity. Overall, our work enhances the analytical capabilities of tensor models by bridging the fields of latent-variable models and tensor decompositions, with implications for fields that rely on multi-dimensional count data, such as network analysis, geospatial studies, and natural language processing.

There are numerous promising avenues for further investigation. For instance, we can use the tools derived in this article to reformulate other Poisson tensor models, such as the Tucker or Tensor Train models \citep{koldaandbader06}, into latent-variable frameworks. Additionally, we can extend our approach to formulate tensor-response regression or tensor completion as latent-variable models and explore how sample size and number of observed entries influence the Fisher information matrix in these contexts \citep{llosaandmaitra22,lock17,Gandy_2011}. We can also expand our focus beyond Poisson distributions to include other distributions where the sums of random variables belong to the same family \citep{llosaandmaitra24}. Furthermore, we can investigate additional applications of the Fisher information matrix, such as rank selection and stationary point diagnostics.

\section{Appendix}
\appendix
\section{Preliminaries}

Consider the matrix functions of matrix argument $f(\mat{X})\in\mbbR^{k\times l}$ and $g(\mat{X})\in\mbbR^{m\times n}$. Suppose $l=m$ so that the matrix product $f(\mat{X})g(\mat{X})$ can be performed. The following matrix product rule follows from the property $\vecc(\mat{A}\mat{X}\mat{B}^\top) = (\mat{B}\otimes \mat{A})\vecc(\mat{X})$ and the differential $\partial[f(\mat{X})g(\mat{X})] = \partial[f(\mat{X})]g(\mat{X}) + f(\mat{X})\partial[g(\mat{X})]$ \citep{minka97}
\begin{equation}\label{eq:matrix-productrule}
\dfrac{\partial \vecc[f(\mat{X})g(\mat{X})]}{\partial \vecc(\mat{X})}
=
[g(\mat{X})^\top\otimes \mat{I}_k]\dfrac{\partial \vecc[f(\mat{X})]}{\partial \vecc(\mat{X})}
+
[\mat{I}_n\otimes f(\mat{X})]\dfrac{\partial \vecc[g(\mat{X})]}{\partial \vecc(\mat{X})}.
\end{equation}

Now suppose $k=m$ and $l=n$ so that the Hadamard product $f(\mat{X})*g(\mat{X})$ can be performed. Similar to before, the following Hadamard-product rule follows from $\vecc(\mat{A}*\mat{X}) = \dv(\mat{A})\vecc(\mat{X})$ and the differential $\partial[f(\mat{X})*g(\mat{X})] = \partial[f(\mat{X})]*g(\mat{X}) + f(\mat{X})*\partial[g(\mat{X})]$ \citep{minka97}
\begin{equation}\label{eq:hadamard-productrule}
\dfrac{\partial \vecc[f(\mat{X})*g(\mat{X})]}{\partial \vecc(\mat{X})}
=
\dv[g(\mat{X})]\dfrac{\partial \vecc[f(\mat{X})]}{\partial \vecc(\mat{X})}
+
\dv[f(\mat{X})]\dfrac{\partial \vecc[g(\mat{X})]}{\partial \vecc(\mat{X})}.
\end{equation}

Now consider vectors $\bx,\by$ and matrix $\mat{S}$ of sizes such that $\by^\top \mat{S}\bx$ can be performed. The following Jacobian holds
\begin{equation}\label{eq:jac_inv}
\dfrac{\partial \left[\by\oslash(\mat{S}\bx)\right]}{\partial \bx}
= -\diag\left[\by\oslash(\mat{S}\bx)^{*2}\right]\mat{S}.
\end{equation}

The following property of Kathri-Rao matrix products holds for any $k=1,2,\dots,P$
\begin{equation}\label{eq:vectorKR}
\vecc(\odot\mat{A}_{[P]}) = \mat{B}_{-k} \vecc(\mat{A}_k),
\end{equation}
where $\mat{B}_{-k}$ is a $R\times R$ block-diagonal matrix  with $(r,r)
$ block $\bigotimes_{p=P}^{1} \mat{B}_{pkr}$, where 
$$
\mat{B}_{pkr}=\begin{cases}
    \mat{I}_{N_k} & p=k\\
    \mat{A}_p(:,r) & p\neq k
\end{cases}.
$$
For example, when $P=2$ we have
$$
\mat{B}_{-1} = \begin{bmatrix}
    \mat{A}_2(:,1) \otimes \mat{I}_{N_1} &&\\
    &\ddots&\\
    &&\mat{A}_2(:,R) \otimes \mat{I}_{N_1}     
\end{bmatrix}
,\quad
\mat{B}_{-2} = \begin{bmatrix}
      \mat{I}_{N_2} \otimes\mat{A}_1(:,1)&&\\
    &\ddots&\\
    && \mat{I}_{N_2} \otimes  \mat{A}_1(:,R)
\end{bmatrix}.
$$

\section{Derivatives of \texorpdfstring{$Q$}{Q}}\label{app:derQ1}

In Section \ref{sec:cpaprLatent} we derived the expected complete loglikelihood $Q(\btheta,\bar \btheta)$. Its gradients and Jacobians are important as per Theorem \ref{thm:oakesHess}.

\begin{lemma}\label{lemma:gradQ1}
For a fixed $\bar\btheta$, and with respect to the parameter vector $\btheta = [\vecc(\mat{A}_1)^\top\dots\vecc(\mat{A}_P)^\top]^\top$, the gradient of $Q(\btheta,\bar\btheta)$ in Equation \eqref{eq:Qfunc2} can be written as
\begin{equation}\label{eq:proof_gradQ1}
\dfrac{\partial}{\partial\btheta} Q(\btheta,\bar\btheta)
=
 \begin{bmatrix}\dv(\mat{A}_1^{*-1}) \vecc(\bar{\mat{Z}}_1) -(*\blambda_{[P]\setminus\{1\}}\otimes \bone_{N_1}) 
\\\vdots\\
 \dv(\mat{A}_P^{*-1}) \vecc(\bar{\mat{Z}}_P) - (*\blambda_{[P]\setminus\{P\}}\otimes \bone_{N_P}) 
 \end{bmatrix},
\end{equation}
and the Hessian is a $P\times P$ block matrix 
\begin{equation*}\label{eq:proof_hessQ1}
\dfrac{\partial^2}{\partial\btheta\,\partial\btheta^\top} Q(\btheta,\bar\btheta)
=
\left\{
\bar{\mat{G}}_{k,l}
\right\}_{k,l},
\end{equation*}
with $(k,l)$ sub-block
\begin{equation*}\label{eq:proof_hessQ1_subblock}
\bar{\mat{G}}_{k,l} = 
-\begin{cases}
\dv(\bar{\mat{Z}}_k\oslash\mat{A}_k^{*2})
& k = l \\
\diag(*\blambda_{[P]\setminus\{k,l\}})\otimes (\bone_{N_k}\bone_{N_l}^\top)
&k\neq l
\end{cases}.
\end{equation*}
When $P=2$, the term $\diag(*\blambda_{[P]\setminus\{k,l\}})$ above is replaced with the identity matrix $\mat{I}_R$.
\end{lemma}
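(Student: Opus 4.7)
The plan is to differentiate $Q(\btheta,\bar\btheta)$ in the form given by Equation \eqref{eq:Qfunc2}, which isolates the dependence on $\mat{A}_p$ for each $p\in[P]$ while absorbing the dependence on the remaining factors into the constant $C_2$. This representation yields the $p$-th block of the gradient \eqref{eq:proof_gradQ1} directly, and the cross-block structure of the Hessian is then recovered by differentiating the gradient a second time while tracking how the factors hidden inside $C_2$ re-emerge.

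For the $p$-th block of the gradient, the log-term $\bone_{N_p}^\top[\bar{\mat{Z}}_p * \log \mat{A}_p]\bone_R = \sum_{i,r}\bar{\mat{Z}}_p(i,r)\log\mat{A}_p(i,r)$ has entrywise partial $\bar{\mat{Z}}_p(i,r)/\mat{A}_p(i,r)$, so in vec form it equals $\dv(\mat{A}_p^{*-1})\vecc(\bar{\mat{Z}}_p)$. The linear term simplifies via $\diag(\bv)\bone = \bv$ to $\bone_{N_p}^\top \mat{A}_p(*\blambda_{[P]\setminus\{p\}}) = \sum_{i,r}\mat{A}_p(i,r)(*\blambda_{[P]\setminus\{p\}})_r$, whose gradient with respect to $\vecc(\mat{A}_p)$ (stacked columnwise) is the vector whose $r$-th block is $(*\blambda_{[P]\setminus\{p\}})_r\bone_{N_p}$, i.e., $(*\blambda_{[P]\setminus\{p\}})\otimes\bone_{N_p}$. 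Subtracting these two contributions gives the stated gradient.

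For the $(k,l)$ block of the Hessian I would differentiate the $k$-th block of \eqref{eq:proof_gradQ1} with respect to $\vecc(\mat{A}_l)^\top$. When $k=l$, only the log-term depends on $\mat{A}_k$ (because $*\blambda_{[P]\setminus\{k\}}$ involves only the other factors), and applying the identity \eqref{eq:jac_inv} (or simply noting entrywise that $\partial/\partial\mat{A}_k(j,r)[\bar{\mat{Z}}_k(j,r)/\mat{A}_k(j,r)] = -\bar{\mat{Z}}_k(j,r)/\mat{A}_k(j,r)^2$) yields $-\dv(\bar{\mat{Z}}_k\oslash\mat{A}_k^{*2})$. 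When $k\neq l$, only the linear term contributes; decomposing $*\blambda_{[P]\setminus\{k\}} = \blambda_l*(*\blambda_{[P]\setminus\{k,l\}})$, using $\partial\blambda_l/\partial\vecc(\mat{A}_l)^\top = \mat{I}_R\otimes\bone_{N_l}^\top$, and applying the Hadamard-product rule \eqref{eq:hadamard-productrule} together with the mixed-product property of $\otimes$ produces $-\diag(*\blambda_{[P]\setminus\{k,l\}})\otimes(\bone_{N_k}\bone_{N_l}^\top)$. The $P=2$ edge case falls out automatically, since $[P]\setminus\{k,l\}=\emptyset$, the empty Hadamard product is $\bone_R$, and $\diag(\bone_R)=\mat{I}_R$.

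The main obstacle is keeping the Kronecker and Hadamard manipulations consistent for the off-diagonal Hessian blocks --- in particular, correctly assembling $(\mat{I}_R\otimes\bone_{N_k})$, $\diag(*\blambda_{[P]\setminus\{k,l\}})$, and $(\mat{I}_R\otimes\bone_{N_l}^\top)$ via the mixed-product property to recover the claimed Kronecker form. A safer fallback is to work directly from the scalar form \eqref{eq:Qfunc1}, compute $\partial^2 Q/(\partial\mat{A}_k(j,r)\,\partial\mat{A}_l(m,s)) = -(*\blambda_{[P]\setminus\{k,l\}})_r\,\delta_{rs}$ entrywise using $\partial\blambda_p(r)/\partial\mat{A}_l(m,s)=\delta_{pl}\delta_{rs}$, and then recognize the resulting entrywise formula as the Kronecker structure asserted in the statement.
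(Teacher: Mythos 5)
Your proposal is correct and follows essentially the same route as the paper: differentiate the form of $Q$ in Equation \eqref{eq:Qfunc2} blockwise (treating $\bar{\mat{Z}}_p$ as fixed), obtain the diagonal Hessian blocks from the log-term, and obtain the off-diagonal blocks from the linear term by exploiting its linearity in $\vecc(\mat{A}_l)$ via $*\blambda_{[P]\setminus\{k\}} = \blambda_l * (*\blambda_{[P]\setminus\{k,l\}})$ — the paper just writes $*\blambda_{[P]\setminus\{k\}}\otimes\bone_{N_k} = [\diag(*\blambda_{[P]\setminus\{k,l\}})\otimes(\bone_{N_k}\bone_{N_l}^\top)]\vecc(\mat{A}_l)$ directly rather than assembling it through the mixed-product property. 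Your entrywise fallback and your observation that the $P=2$ case is the empty Hadamard product are both sound.
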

\begin{proof}
Equation \eqref{eq:proof_gradQ1} is a block-matrix with $k$th vertical block
\begin{equation*}
    \begin{aligned}
    \dfrac{\partial Q(\btheta,\bar\btheta)}{\partial \vecc (\mat{A}_k)} 
    =
    \vecc\Big( \dfrac{\partial Q(\btheta,\bar\btheta)}{\partial \mat{A}_k}\Big)
    &= 
    \vecc\Big( 
    \bar{\mat{Z}}_k * \mat{A}_k^{*-1}\Big) - \dfrac{\partial }{\partial \vecc (\mat{A}_k)} \bone_{N_k}^\top\mat{A}_p(*\blambda_{[P]\setminus\{k\}}))
    \\&= 
    \dv(\mat{A}_k^{*-1}) \vecc(\bar{\mat{Z}}_k) -(*\blambda_{[P]\setminus\{k\}}\otimes \bone_{N_k}).
    \end{aligned}
\end{equation*}
From the above we obtain
\begin{equation*}
    \begin{aligned}
    \bar{\mat{G}}_{k,k} := \dfrac{\partial Q(\btheta,\bar\btheta)}{(\partial \vecc \mat{A}_k)(\partial \vecc \mat{A}_k)^\top} 
    &= 
    \dfrac{\partial}{(\partial \vecc \mat{A}_k)^\top} \vecc\Big( 
    \bar{\mat{Z}}_k * \mat{A}_k^{*-1}
    \Big)
    \\&= 
    -\dv( \bar{\mat{Z}}_k * \mat{A}_k^{*-2}) .
    \end{aligned}
\end{equation*}
When $P=2$ we have
\begin{equation*}
    \begin{aligned}
    \bar{\mat{G}}_{k,l} := \dfrac{\partial Q(\btheta,\bar\btheta)}{(\partial \vecc \mat{A}_k)(\partial \vecc \mat{A}_l)^\top} 
    &= 
    -\dfrac{\partial}{(\partial \vecc \mat{A}_l)^\top} \Big( 
    *\blambda_{[P]\setminus\{k\}}\otimes \bone_{N_k}
    \Big).
    \\&=
    -\dfrac{\partial}{(\partial \vecc \mat{A}_l)^\top} \Big( 
    \left[\diag(*\blambda_{[P]\setminus\{k,l\}})\otimes (\bone_{N_k}\bone_{N_l}^\top)\right]\vecc(\mat{A}_l)
    \Big)
    \\&=
    -\diag(*\blambda_{[P]\setminus\{k,l\}})\otimes (\bone_{N_k}\bone_{N_l}^\top),
    \end{aligned}
\end{equation*}
and the $P=2$ case follows similarly but with $*\blambda_{[P]\setminus\{k\}}\otimes \bone_{N_k}
=
\left[\mat{I}_R\otimes (\bone_{N_k}\bone_{N_l}^\top)\right]\vecc(\mat{A}_l).$

\end{proof}

\begin{lemma}\label{lemma:Zders}
Let $\bar{\mat{Z}}_k = \bar{\mat{A}}_k * [\bar{ \mat{E}}_k (\odot\bar{\mat{A}}_{[P]\setminus\{k\}})]$, $\bar{ \mat{E}}_k = \mat{X}_{(k)}\oslash (\bar{\mat{A}}_k(\odot\bar{\mat{A}}_{[P]\setminus\{k\}})^\top) $ from Equation \eqref{eq:Qfunc2}, using the notation of Theorem \ref{thm:OFIM_genR}.  Also, define $\bar{\mat{B}}_{-k,l}$ as the matrix that satisfies
\begin{equation*}\label{eq:Anokl}
\vecc(\odot\bar{\mat{A}}_{[P]\setminus\{k\}}) = \bar{\mat{B}}_{-k,l} \vecc (\bar{\mat{A}}_l),
\end{equation*}
 which can be constructed according to Equation \eqref{eq:vectorKR}. 
  Then
\begin{equation*}
\dfrac{\partial^2}{\partial\btheta\,\partial\bar\btheta^\top} Q(\btheta,\bar\btheta)
=\left\{
\dv(\bar{\mat{A}}_k\oslash \mat{A}_k)\bar{\mat{H}}_{k,l}
\right\}_{k,l}.
\end{equation*}
When $k=l$ we have 	
$$
\bar{\mat{H}}_{k,k}=
\dv\left[\bar{\mat{Z}}_k\oslash \bar{\mat{A}}^{*2} \right] -
\big\{ 
\diag\left[
(\ten{X} \oslash \ten{\bar M}^{*2}) \bar\times_{q\neq k}(\bar\ba_{q,r}*\bar\ba_{q,s})
\right]
\big\}_{r,s},
$$
and when $k\neq l$,
$$
\bar{\mat{H}}_{k,l} =
\begin{cases}
(\mat{I}_R\otimes \bar{ \mat{E}}_k)
-\left\{ 
[\bar\ba_{k,s}\bar\ba_{l,r}^\top]*
(\mat{X} \oslash \bar{\mat{M}}^{*2})_{(k)}
\right\}_{r,s}
 & \text{if } P =2 \\
(\mat{I}_R\otimes \bar{ \mat{E}}_k)\bar{\mat{B}}_{-k,l}
-
\left\{ 
[\bar\ba_{k,s}\bar\ba_{l,r}^\top ]*
\left[
(\ten{X} \oslash \ten{\bar M}^{*2})\bar\times_{q\neq k,l} 
(\bar\ba_{q,r}*\bar\ba_{q,s})
\right]
\right\}_{r,s} &\text{if } P >2
\end{cases}.
$$
\end{lemma}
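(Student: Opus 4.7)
My starting point will be the gradient $\partial Q/\partial\vecc(\mat{A}_k)$ from Lemma~\ref{lemma:gradQ1}, of which only the term involving $\vecc(\bar{\mat{Z}}_k)$ depends on $\bar\btheta$. Since the prefactor $\dv(\mat{A}_k^{*-1})$ and the subtracted term $*\blambda_{[P]\setminus\{k\}}\otimes\bone_{N_k}$ are functions of $\btheta$ alone,
$$
\frac{\partial^2 Q(\btheta,\bar\btheta)}{\partial\vecc(\mat{A}_k)\,\partial\vecc(\bar{\mat{A}}_l)^\top}
=\dv(\mat{A}_k^{*-1})\,\frac{\partial\vecc(\bar{\mat{Z}}_k)}{\partial\vecc(\bar{\mat{A}}_l)^\top}.
$$
Hence the entire task reduces to differentiating $\bar{\mat{Z}}_k$ with respect to $\bar{\mat{A}}_l$ and to recognizing that the result always carries the common prefactor $\dv(\bar{\mat{A}}_k\oslash\mat{A}_k)$, which arises precisely whenever $\dv(\mat{A}_k^{*-1})$ combines with a $\dv(\bar{\mat{A}}_k)$ produced by the Jacobian.

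Starting from the factorization $\bar{\mat{Z}}_k=\bar{\mat{A}}_k*\bigl[\bar{\mat{E}}_k(\odot\bar{\mat{A}}_{[P]\setminus\{k\}})\bigr]$, I will first apply the Hadamard-product rule \eqref{eq:hadamard-productrule}. This produces two contributions: one in which $\bar{\mat{A}}_k$ itself is differentiated (nonzero only when $l=k$) and one in which the bracketed matrix product is differentiated. The first collapses via $\bar{\mat{E}}_k(\odot\bar{\mat{A}}_{[P]\setminus\{k\}})=\bar{\mat{Z}}_k\oslash\bar{\mat{A}}_k$ into $\dv(\bar{\mat{Z}}_k\oslash(\bar{\mat{A}}_k*\mat{A}_k))=\dv(\bar{\mat{A}}_k\oslash\mat{A}_k)\,\dv(\bar{\mat{Z}}_k\oslash\bar{\mat{A}}_k^{*2})$, accounting for the first summand of $\bar{\mat{H}}_{k,k}$. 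For the second contribution I will invoke the matrix-product rule \eqref{eq:matrix-productrule} to separate the Jacobians of $\bar{\mat{E}}_k$ and of $\odot\bar{\mat{A}}_{[P]\setminus\{k\}}$. The latter is immediate from \eqref{eq:vectorKR}, giving $\bar{\mat{B}}_{-k,l}$ for $l\neq k$ and zero for $l=k$, which supplies the positive piece $(\mat{I}_R\otimes\bar{\mat{E}}_k)\bar{\mat{B}}_{-k,l}$ of $\bar{\mat{H}}_{k,l}$. For the former I will write $\bar{\mat{E}}_k=\mat{X}_{(k)}\oslash(\bar{\mat{A}}_k(\odot\bar{\mat{A}}_{[P]\setminus\{k\}})^\top)$, apply \eqref{eq:jac_inv} to produce the factor $-\dv[\vecc(\mat{X}_{(k)}\oslash\bar{\mat{M}}_{(k)}^{*2})]$ (supplying the minus sign in $\bar{\mat{H}}_{k,l}$), and use standard $\vecc$ identities together with \eqref{eq:vectorKR} to handle the bilinear form inside.

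The hard part will be translating the resulting Kronecker-plus-diagonal expressions into the compact $(r,s)$-block form with the tensor-mode-contraction notation of the claim. After $\dv[\vecc(\mat{X}_{(k)}\oslash\bar{\mat{M}}_{(k)}^{*2})]$ is sandwiched between the Khatri-Rao structures produced above, I plan to argue entrywise that its $(r,s)$ sub-block equals $\diag\bigl(\ten{X}\oslash\ten{\bar M}^{*2}\bar\times_{q\neq k}(\bar\ba_{q,r}*\bar\ba_{q,s})\bigr)$ when $k=l$, and $[\bar\ba_{k,s}\bar\ba_{l,r}^\top]*\bigl(\ten{X}\oslash\ten{\bar M}^{*2}\bar\times_{q\neq k,l}(\bar\ba_{q,r}*\bar\ba_{q,s})\bigr)$ when $k\neq l$ and $P>2$. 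The key observation is that indexing $\vecc(\mat{X}_{(k)}\oslash\bar{\mat{M}}_{(k)}^{*2})$ by $(i_k,\bi_{-k})$ and pairing with the $r$-th and $s$-th Khatri-Rao columns produces exactly $\sum_{\bi_{-k}}(x_{\bi}/\bar m_{\bi}^2)\prod_{q\neq k}\bar a_q(i_q,r)\bar a_q(i_q,s)$, which is that contraction. The $P=2$ case is the degenerate instance in which $\bar{\mat{B}}_{-k,l}$ collapses to an identity Kronecker block and no cross-mode contraction $\bar\times_{q\neq k,l}$ remains, so the off-diagonal summand simplifies directly to $[\bar\ba_{k,s}\bar\ba_{l,r}^\top]*(\mat{X}\oslash\bar{\mat{M}}^{*2})_{(k)}$ as stated.
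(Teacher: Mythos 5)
Your proposal is correct and follows essentially the same route as the paper: differentiate the gradient of $Q$ from Lemma~\ref{lemma:gradQ1} in $\bar\btheta$, apply the Hadamard- and matrix-product rules \eqref{eq:hadamard-productrule} and \eqref{eq:matrix-productrule} to $\bar{\mat{Z}}_k=\bar{\mat{A}}_k*[\bar{\mat{E}}_k(\odot\bar{\mat{A}}_{[P]\setminus\{k\}})]$, use \eqref{eq:vectorKR} and \eqref{eq:jac_inv} for the Jacobians of $\odot\bar{\mat{A}}_{[P]\setminus\{k\}}$ and $\bar{\mat{E}}_k$, and convert to the $(r,s)$-block contraction form entrywise. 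The only cosmetic difference is that the paper routes the derivative of $\bar{\mat{E}}_k$ through $\vecc(\ten{X}\oslash\bar{\ten{M}})$ via a permutation matrix $K_{(k)}$ rather than differentiating the matricized form directly, which does not change the substance of the argument.
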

\begin{proof}
Differentiating the gradient of Equation \eqref{eq:proof_gradQ1} with respect to $\bar\btheta$ we obtain 
\begin{equation}\label{eq:Djac1}
\dfrac{\partial^2}{\partial\btheta\,\partial\bar\btheta^\top} Q(\btheta,\bar\btheta)
=\left\{
\dv(\mat{A}_k^{*-1})\dfrac{\partial \vecc(\bar{\mat{Z}}_k)}{\partial\vecc(\bar{\mat{A}}_l)}
\right\}_{k,l}.
\end{equation}
Equation \eqref{eq:Djac1} involves only derivatives of $\bar{\mat{Z}}_k$, which we can write in terms of derivatives of $\bar{ \mat{E}}_k$ only using the product rules we derived in Equations \eqref{eq:matrix-productrule} and \eqref{eq:hadamard-productrule}
\begin{equation}\label{eq:dZk1}
\dfrac{\partial \vecc(\bar{\mat{Z}}_k)}{\partial\vecc(\bar{\mat{A}}_l)}
=
\begin{cases}
\dv(\bar{\mat{A}}_k)\left[
\dv(\bar{\mat{Z}}_{k}\oslash \bar{\mat{A}}_k^{*2}) + 
((\odot\bar{\mat{A}}_{[P]\setminus\{k\}})^\top\otimes \mat{I}_{N_k}) 
\dfrac{\partial \vecc(\bar{ \mat{E}}_k)}{\partial\vecc(\bar{\mat{A}}_k)}
\right] & k= l \\
\dv(\bar{\mat{A}}_k)\left[
(\mat{I}_R\otimes \bar{ \mat{E}}_k)\bar{\mat{B}}_{-k,l}
+
((\odot\bar{\mat{A}}_{[P]\setminus\{k\}})^\top\otimes \mat{I}_{N_k})\dfrac{\partial \vecc(\bar{ \mat{E}}_k)}{\partial\vecc(\bar{\mat{A}}_l)}
\right]
 & k\neq l
\end{cases}.
\end{equation}
When $P=2$, $\odot\bar{\mat{A}}_{[P]\setminus\{k\}}=\mat{A}_{l}$, and so  $\bar{\mat{B}}_{-k,l}$ is an identity matrix. 
To find the derivatives of $\bar{\mat{E}}_k$ first note that $\vecc(\bar{\mat{E}}_k) = K_{(k)} \vecc(\ten{X}\oslash\bar{\ten{M}})$, where $K_{(k)}$ is a permutation matrix defined in \citep[Lemma 2.1f]{llosaandmaitra22} that generalizes the commutation matrix \citep{magnus79}. Hence,
\begin{equation}\label{eq:dEk}
\begin{aligned}
\dfrac{\partial \vecc(\bar{ \mat{E}}_k)}{\partial\vecc(\bar{\mat{A}}_l)} 
& = 
K_{(k)}\dfrac{\partial }{\partial\vecc(\bar{\mat{A}}_l)}
\vecc(\ten{X}\oslash\bar{\ten{M}})
\\&= 
K_{(k)}\dfrac{\partial }{\partial\vecc(\bar{\mat{A}}_l)}
\Big[\vecc(\ten{X})\oslash\vecc(\bar{\ten{M}})\Big]
\\&= 
K_{(k)}\dfrac{\partial }{\partial\vecc(\bar{\mat{A}}_l)}
\Big[\vecc(\ten{X})\oslash(\bar{\mat{B}}_{-l}\vecc(\bar{\mat{A}}_l))\Big]
\\&= 
-K_{(k)}\dv(\ten{X}\oslash\bar{\ten{M}}^{*2})\bar{\mat{B}}_{-l},
\end{aligned}
\end{equation}
where the last equality follows from the identity in Equation \eqref{eq:jac_inv}. 
The remainder of the proof follows from plugging Equation \eqref{eq:dEk} into \eqref{eq:dZk1}, and \eqref{eq:dZk1} into \eqref{eq:Djac1}, and simplifying. 
\end{proof}

\section{Proof of Theorem \ref{thm:OFIM_genR}}\label{proof:Hessian}

\begin{proof}
We obtain the loglikelihood Hessian as a result of Oakes' Theorem (Theorem \ref{thm:oakesHess}):
$$
\dfrac{\partial^2}{\partial \btheta\, \partial \btheta^\top} 
\ell(\btheta)
= 
\left[
\dfrac{\partial^2}{\partial \btheta\, \partial \btheta^\top} Q(\btheta,\bar\btheta)
+
\dfrac{\partial^2}{\partial \btheta\, \partial \bar\btheta^\top} Q(\btheta,\bar\btheta)
\right]_{\bar\btheta = \btheta}.
$$
Plugging our expressions for $\frac{\partial^2}{\partial \btheta\, \partial \btheta^\top} Q(\btheta,\bar\btheta)$ from Lemma \ref{lemma:gradQ1}, and for $\frac{\partial^2}{\partial \btheta\, \partial \bar\btheta^\top} Q(\btheta,\bar\btheta)$ from Lemma \ref{lemma:Zders} into the above equation, and evaluating  $\bar \btheta = \btheta$, leads to the observed Fisher information
\begin{equation*}\label{eq:hessllhd}
\FIM_{obs}(\btheta,\bx):= -\dfrac{\partial^2}{\partial \btheta\, \partial \btheta^\top} 
\ell(\btheta)
=
\left\{
-(\bar{\mat{G}}_{k,l} + \bar{\mat{H}}_{k,l})_{\bar\btheta = \btheta}
\right\}_{k,l}.
\end{equation*}
 After cancellation we have when $k=l$ that
$$
-(\bar{\mat{G}}_{k,k} + \bar{\mat{H}}_{k,k})_{\bar\btheta = \btheta} = \left\{ 
\diag\left[
(\ten{X} \oslash \ten{M}^{*2})\bar{\times}_{q\neq k} (\ba_{q,r}*\ba_{q,s})
\right)
\right\}_{r,s},
$$
and when $k\neq l$ that
$$
-(\bar{\mat{G}}_{k,l} + \bar{\mat{H}}_{k,l})_{\bar\btheta = \btheta} = 
\left\{ 
\left[\ba_{k,s}\ba_{l,r}^\top \right]*
\left[
(\ten{X} \oslash \ten{M}^{*2})\bar\times_{q\neq k,l}
(\ba_{q,r}*\ba_{q,s})
\right]
\right\}_{r,s}
+
\mat{F}_{k,l},
$$
where 
$\mat{F}_{k,l} = \diag(*\blambda_{[P]\setminus\{k,l\}}))\otimes (\bone_{N_k}\bone_{N_l}^\top) -(\mat{I}_R\otimes \mat{E}_k) \mat{B}_{-k,l}$ is a block-diagonal matrix simplified in the theorem statement. This finds the observed Fisher information.
For the expected Fisher information, note that since $\mbbE(\ten{X})=\ten{M}$ we have $\mbbE(\ten{X} \oslash \ten{M}^{*2})=\ten{M}^{*-1}$ and $\mbbE(\mat{F}_{k,l})=0$. Hence, the expression for $\FIM(\btheta)$ is the same as for $\FIM_{obs}(\btheta,\bx)$, except that $\mat{F}_{k,l}$ is removed, and $\ten{X} \oslash \ten{M}^{*2}$ is replaced with $\ten{M}^{*-1}$.

\end{proof}

\bibliographystyle{plainnat} 
\bibliography{references} 

\end{document}